\definecolor{meinBlau}{rgb}{0.2,0.2,0.9} 
\definecolor{blau}{rgb}{0,0,0.75} 
\definecolor{rot}{rgb}{0.74,0,0} 
\newtheorem{theorem}{Theorem}
\newtheorem{lem}[theorem]{Lemma}
\newtheorem{coroll}{Corollary}
\newtheorem{prop}{Proposition}
\theoremstyle{definition}
\newtheorem{remark}{Remark}
\def\P{{\mathbb {P}}}
\def\E{{\mathbb {E}}}
\def\Var{{\mathbb {V}}}
\newcommand{\fallfak}[2]{\ensuremath{#1^{\underline{#2}}}}
\newcommand{\auffak}[2]{\ensuremath{#1^{\overline{#2}}}}
\newcommand{\N}{\ensuremath{\mathbb{N}}}
\newcommand{\Stir}[2]{\genfrac{ \{ }{ \} }{0pt}{}{#1}{#2}}
\DeclareMathOperator{\law}{\overset{\mathcal{L}}{=}}
\DeclareMathOperator{\claw}{\overset{\mathcal{L}}{\rightarrow}}
\DeclareMathOperator{\Unif}{Unif}
\DeclareMathOperator{\Geom}{Geom}
\DeclareMathOperator{\Exp}{Exp}
\DeclareMathOperator{\LinExp}{LinExp}
\DeclareMathOperator{\Rayleigh}{Rayleigh}
\DeclareMathOperator{\Bernoulli}{Bernoulli}
\DeclareMathOperator{\BetaDist}{Beta}
\DeclareMathOperator{\Cov}{Cov}
\DeclareMathOperator{\Corr}{Corr}
\begin{document}

\author[M.~Kuba]{Markus Kuba}
\address{Markus Kuba\\
Department Applied Mathematics and Physics\\
University of Applied Sciences - Technikum Wien\\
H\"ochst\"adtplatz 5, 1200 Wien} %
\email{kuba@technikum-wien.at}

\author[A.~Panholzer]{Alois Panholzer}
\address{Alois Panholzer\\
Institut f{\"u}r Diskrete Mathematik und Geometrie\\
Technische Universit\"at Wien\\
Wiedner Hauptstr. 8-10/104\\
1040 Wien, Austria} \email{Alois.Panholzer@tuwien.ac.at}

\title[On Card guessing with two types of cards]{On Card guessing with two types of cards}

\keywords{Card guessing, diminishing urn, exact distribution, limit law}%
\subjclass[2000]{05A15, 05A16, 60F05, 60C05} %

\begin{abstract}
We consider a card guessing strategy for a stack of cards with two different types of cards, say $m_1$ cards of type red (heart or diamond) and $m_2$ cards of type black (clubs or spades). Given a deck of $M=m_1+m_2$ cards, we propose a refined counting of the number of correct color guesses, when the guesser is provided with complete information, in other words, when the numbers $m_1$ and $m_2$ and the color of each drawn card are known. We decompose the correct guessed cards into three different types by taking into account the probability of making a correct guess, and provide joint distributional results for the underlying random variables as well as joint limit laws.
\end{abstract}

\maketitle

\section{Introduction}

Card guessing games have been considered in the literature in many articles~\cite{DiaconisGraham1981,HeOttolini2021,KnoPro2001,KuPanPro2009,Leva1988,OttoliniSteiner2022,Read1962,Sulanke,Zagier1990}. As the basic setting a randomized deck of $M$ cards is considered. 
A person is asked to guess the card on top of the deck. Afterwards, the top card is revealed to the guesser and then discarded. 
This process is continued until no more cards are left. If the person guessing the cards knows in advance the composition of the
deck before randomization, say the number of hearts, diamonds, clubs and spades, respectively, one is interested in the number of correct guesses, its distribution, expectation and limit laws, under the assumption that the guesser uses this information to maximize the chance of a correct guess in each step. This is sometimes generalized to $n\ge 2$ types of different cards~\cite{DiaconisGraham1981,HeOttolini2021,OttoliniSteiner2022}, but so far only in terms of the expected value.
There are also variations of the card guessing procedure with only partial information revealed, we refer to the works of Diaconis et al.~\cite{Diaconis1978,Diaconis2022a,Diaconis2022b}, and also to~\cite{BlackwellHodges1957,Efron1971} for applications in clinical trials.

\smallskip

We consider the setting where two different kinds of cards are considered, say red (heart or diamond) and black (clubs or spades).
Their numbers are given by non-negative integers $m_1$, $m_2$, with $M=m_1+m_2$. We are interested in the random variable $C_{m_1,m_2}$, counting the number of correct guesses. This quantity has been studied in~\cite{KnoPro2001,KuPanPro2009,Leva1988,Read1962,Sulanke,Zagier1990}:
Sulanke~\cite{Sulanke} obtained the probability mass function of $C_{m_1,m_2}$, which was later rederived by Knopfmacher and Prodinger~\cite{KnoPro2001}. Zagier~\cite{Zagier1990} considered the distribution and the expectation of $C_{m,m}$. The epected value is also covered by the before mentioned general results for $n\ge 2$ different types. A more general result for the expected value
for arbitrary $m_1,m_2$ was obtained in~\cite{KnoPro2001}. We also mention the work~\cite{KT2021}, 
which rederives results for $C_{m_1,m_2}$, but seems to be unaware of the previous works. Moreover, limit laws for $C_{m_1,m_2}$ have been derived in~\cite{KuPanPro2009}, however without stating proofs, leading to phase transitions according to the different growths of $m_1$ and $m_2$ in the general case $\max\{m_1,m_2\}\to\infty$.

\bigskip

In the following we gain more insight into the number of correct guesses, its structure, as well as its limit laws. 
We decompose the correct guessed cards $C_{m_1,m_2}$ into three different types according to the probability $p\in[0.5,1]$ of a correct guess. Given $m_1$ red and $m_2$ black cards, where without loss of generality we restrict ourselves to $0\le m_2\le m_1$, we are interested in the number of boundary cases, say $p=0.5$, or $p=1$, or the cases in between, $0.5 < p <1$. In more detail, this means that we consider draws, where the guesser knows for sure he will be correct, $p=1$, 
second, draws, where the guesser has a pure luck fifty-fifty chance, and third, draws, where the guesser has a chance for a correct guess strictly between fifty and one hundred percent, $0.5<p<1$. 

\smallskip 

Let $T_{m_1,m_2}$ denote the number of certified correct guesses, $L_{m_1,m_2}$ denote the number of more likely guesses,
and $P_{m_1,m_2}$ the number of pure luck guesses. These random variables refine the total number of correct guesses, as we have the identity
\[
C_{m_1,m_2}=T_{m_1,m_2}+L_{m_1,m_2}+P_{m_1,m_2}.
\]
Moreover, the three random variables will give a detailed insight in how big the advantage of an educated guesser 
is compared to just random guessing. We note in advance that intuition dictates for $m_1 \gg m_2$, the number of trivially correct guesses $T_{m_1,m_2}$ should dominate the asymptotics of $C_{m_1,m_2}$, whereas for $m_1 \sim m_2$, of the same order, we should observe an 
interplay between more likely guesses $L_{m_1,m_2}$ and pure luck $P_{m_1,m_2}$.

\smallskip

Interestingly, it will turn out that the random variable $T_{m_1,m_2}$ links the card guessing game directly to so-called
diminishing urn models~\cite{FlaDuPuy2006,HKP2007,KubPanClass}: one interprets the card guessing game as an urn containing two types of balls, 
say red and black. At random, balls are drawn from the urn, their color inspected and then removed. This is well known as a
sampling without replacement urn: $\left(\begin{smallmatrix} -1 & 0\\0 &-1 \end{smallmatrix}\right)$.
One is interested in the number of balls, when one type is emptied. Using a geometric interpretation via lattice paths, cf.\ Section~\ref{sec:Geometric_interpretation}, this corresponds to an absorption at the coordinate axes~\cite{HKP2007}, similar to $T_{m_1,m_2}$. 

\smallskip

We propose a generating function approach, based on Prodinger and Knopfmacher's analysis~\cite{KnoPro2001}, 
to determine the distribution and limit laws of the aforehand stated random variables. 
This allows to obtain a detailed insight into the nature of the limit laws~\cite{KuPanPro2009}, 
as well as the precise relation of the card guessing game to so-called diminishing urn models. 

\smallskip

As a remark concerning notation used throughout this work, we always write $X \law Y$ to express equality in distribution of two random variables (r.v.) $X$ and $Y$, and $X_{n} \claw X$ for the weak convergence (i.e., convergence in distribution) of a sequence
of random variables $X_{n}$ to a r.v.\ $X$. Furthermore we use $\fallfak{x}{s}:=x(x-1)\dots(x-(s-1))$ for the falling factorials, and $\auffak{x}{s}:=x(x+1)\dots(x+s-1)$ for the rising factorials, $s\in\N_0$. Moreover, $f_{n} \ll g_{n}$ denotes that a sequence $f_{n}$ is asymptotically smaller than a sequence $g_{n}$, i.e., $f_{n} = o(g_{n})$, $n \to \infty$.

\section{Distributional analysis}
\subsection{Decomposition of pure luck}
In order to analyze the number of pure luck guesses, we actually turn to another random variable, 
$W_{m_1,m_2}$, counting the number of times during the card guessing process when the number of red and black cards are equal and non-zero. This random variable allows to obtain a decomposition of pure luck guesses.

\begin{lem}[Decomposition of pure luck guesses]\label{lem:P-W-Relation}
Let $B(n,p)$ denote a Binomial distribution with success probability $p$ and $n$ trials. The random variable $P_{m_1,m_2}$, counting the number of pure luck guesses, 
is distributed as a Binomial distribution with a random number of trials:
\[
P_{m_1,m_2} \law B\big(W_{m_1,m_2},\textstyle{\frac{1}{2}}\big).
\]
\end{lem}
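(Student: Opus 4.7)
The plan is to condition on the sequential history of the deck and exploit the symmetry that, whenever equal numbers of red and black cards remain, the next card is red or black with equal probability. Let $(R_i,B_i)$ be the numbers of red and black cards remaining just before the $i$th draw (so $(R_1,B_1)=(m_1,m_2)$), and set $S_i = \mathbf{1}[R_i=B_i>0]$. By the very definition of a pure luck situation, $W_{m_1,m_2} = \sum_{i=1}^{M}S_i$. Writing $U_i$ for the indicator of a correct guess at step $i$, this yields the pathwise decomposition
\[
P_{m_1,m_2} \;=\; \sum_{i\,:\,S_i=1} U_i,
\]
so it suffices to show that, conditional on the positions of the pure luck steps (and hence on $W_{m_1,m_2}$), the indicators $U_i$ at those positions are independent $\Bernoulli(\tfrac12)$ random variables.

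First I would record the required distributional input. Since the sequence of card colours is a uniformly random arrangement of $m_1$ red and $m_2$ black cards, sampling without replacement gives $\P(\text{card }i\text{ is red}\mid \mathcal{F}_{i-1}) = R_i/(R_i+B_i)$, where $\mathcal{F}_{i-1}$ denotes the natural filtration of past cards and guesses. On the event $\{S_i=1\}$ this probability is exactly $\tfrac12$, so the colour of card $i$ is $\Bernoulli(\tfrac12)$ conditional on $\mathcal{F}_{i-1}$, regardless of which colour the strategy announces. Consequently $\P(U_i=1 \mid \mathcal{F}_{i-1}, S_i=1) = \tfrac12$. Next, I would enumerate the pure luck times as $i_1<i_2<\cdots<i_{W_{m_1,m_2}}$; since the event $\{S_{i_j}=1\}$ together with the earlier indicators $U_{i_1},\dots,U_{i_{j-1}}$ are all $\mathcal{F}_{i_j-1}$-measurable, iterating the preceding identity over $j$ shows that, conditional on $W_{m_1,m_2}=k$ and on the index set $\{i_1,\dots,i_k\}$, the $U_{i_j}$ are i.i.d.\ $\Bernoulli(\tfrac12)$. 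Summing them yields $P_{m_1,m_2}\mid W_{m_1,m_2}=k \law B(k,\tfrac12)$, which is the claimed statement.

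The main obstacle is essentially bookkeeping: arranging the filtration carefully enough that the symmetry ``equal remaining counts imply $\Bernoulli(\tfrac12)$'' translates unambiguously into conditional independence of the outcomes at the random pure luck times. No generating function or analytic machinery is required for this lemma; it is really a structural identity expressing that all the randomness in pure luck guesses comes from $W_{m_1,m_2}$ independent fair coin flips, independently of both the deck permutation and the guesser's (possibly randomized) strategy at those moments.
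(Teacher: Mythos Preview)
Your proof is correct and follows the same idea as the paper's own argument: at each pure luck time the next card is a fair coin flip independent of the past, so conditioning on the number $W_{m_1,m_2}$ of such times yields a Binomial$(W_{m_1,m_2},\tfrac12)$ sum. The paper states this in two informal sentences, whereas you have supplied the filtration bookkeeping that makes the conditional-independence step precise; the approaches are otherwise identical.
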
  

\begin{proof}
By definition of $P_{m_1,m_2}$, each individual such guess happens when we have a success probability of $p=0.5$, thus, when both types of cards are equally many. At any time reaching a composition $(j,j) \neq (0,0)$ (for the pair consisting of the number of cards of each color), we thus can toss a fair coin, independent of what has happened before. Since $W_{m_1,m_2}$ counts the (random) number of times we reach such a state $(j,j) \neq (0,0)$ with equal numbers, the result follows.
\end{proof}

\subsection{Geometric interpretation via lattice paths and decompositions}\label{sec:Geometric_interpretation}
\begin{lem}\label{lem:L-T-C-Relation}
The random variables $L_{m_1,m_2}$ and $T_{m_1,m_2}$, counting the more likely guesses and the trivial guesses, satisfy
\begin{equation}\label{eqn:L-T-Relation}
  L_{m_1,m_2} + T_{m_1,m_2} = m_1.
\end{equation}
Consequently, the total number of correct guesses satisfies
\begin{equation}
  C_{m_1,m_2} = m_{1} + P_{m_1,m_2} \law m_{1} + B\big(W_{m_1,m_2},\textstyle{\frac{1}{2}}\big).
\end{equation}
\end{lem}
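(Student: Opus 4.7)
The plan is to work with the lattice-path interpretation previewed in Section~\ref{sec:Geometric_interpretation}. I identify the deck with a monotone lattice path from $(m_1,m_2)$ to $(0,0)$ in $\N_0^2$, where each unit west step (an ``L-step'') corresponds to revealing a red card and each unit south step (a ``D-step'') to a black card. Because at every non-balanced state $(x,y)$ with $x\ne y$ the optimal strategy deterministically guesses the majority color, the random variables $L_{m_1,m_2}$ and $T_{m_1,m_2}$ depend only on the path itself, while the coin flips at balanced states only affect $P_{m_1,m_2}$.

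The first reformulation I would carry out is that $L_{m_1,m_2}+T_{m_1,m_2}$ equals the number of L-steps starting at states with $x>y$ plus the number of D-steps starting at states with $y>x$. Since the total number of L-steps is $m_1$, identity~\eqref{eqn:L-T-Relation} is equivalent to
\[
\#\bigl\{\text{D-steps from }(x,y),\,y>x\bigr\}=\#\bigl\{\text{L-steps from }(x,y),\,x\le y\bigr\}.
\]

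The main step is an excursion analysis of the open upper region $U^{\circ}=\{(x,y):x<y\}$. The assumption $m_1\ge m_2$ ensures that both $(m_1,m_2)$ and $(0,0)$ lie outside $U^{\circ}$, so the path enters and exits $U^{\circ}$ the same number of times; call this common count $N$. A short case analysis of the four possible step types shows that entries into $U^{\circ}$ can only occur as L-steps from a diagonal point $(k,k)$ to $(k-1,k)$, and exits only as D-steps from $(m,m+1)$ to $(m,m)$. In particular $N$ equals the number of L-steps starting on the diagonal. Within a single excursion the path runs from an entry point $(k-1,k)$ to some last point $(m,m+1)$ on the line $y=x+1$ and then exits by one D-step; hence the L-steps and the non-exit D-steps inside the excursion are equinumerous. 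Summing the local balances over the $N$ excursions and then adding the $N$ exit D-steps yields exactly the displayed identity, establishing $L_{m_1,m_2}+T_{m_1,m_2}=m_1$.

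The second assertion then follows immediately: combining the decomposition $C_{m_1,m_2}=T_{m_1,m_2}+L_{m_1,m_2}+P_{m_1,m_2}$ with the just-proved identity and Lemma~\ref{lem:P-W-Relation} gives $C_{m_1,m_2}=m_1+P_{m_1,m_2}\law m_1+B(W_{m_1,m_2},\tfrac12)$. The only real difficulty is the careful bookkeeping in the excursion argument---one must verify that every excursion genuinely terminates with an exit before the path reaches $(0,0)$ (it does, since $(0,0)$ lies on the diagonal and not strictly above it), and that the asymmetric treatment of the diagonal (an L-step from $(k,k)$ enters $U^{\circ}$, whereas a D-step from $(k,k)$ leaves the closed upper half-plane) is precisely what breaks the $m_1\leftrightarrow m_2$ symmetry and explains why the sum is $m_1$ rather than $m_2$.
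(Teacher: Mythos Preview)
Your argument is correct, but it takes a genuinely different route from the paper's.

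The paper does \emph{not} work with the full sampling-without-replacement lattice on $\N_0\times\N_0$; instead it introduces a \emph{folded} model that lives entirely in the wedge $\{x\ge y\ge 0\}$. Whenever the process is on the diagonal $(j,j)$, a down step to $(j,j-1)$ is taken with probability~$1$: the event ``a type-$1$ card is drawn'' (which would send an unfolded path to $(j-1,j)$) is absorbed by swapping the r\^oles of the two colours. In this folded picture, left steps can only start from states with $x>y$, so each of the $m_1$ left steps automatically contributes exactly one to $L_{m_1,m_2}+T_{m_1,m_2}$ (to $T$ if on the $x$-axis, to $L$ otherwise), and the identity is immediate---no excursion bookkeeping is needed.

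Your approach stays in the unfolded model and instead proves directly, via the entry/exit analysis of $U^{\circ}=\{x<y\}$, that the D-step surplus above the diagonal exactly compensates the L-step deficit. This is a perfectly valid alternative; it is more self-contained (no auxiliary model to justify), but somewhat longer. The paper's folding has the additional payoff that it is precisely the model on which the subsequent generating-function recursion is built, so for them the identity really is a one-line observation inside an already-established framework.
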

\begin{remark}
We remark that the latter relation gives an explanation to the occurrence of a shift by $m_{1}$, appearing in \cite{KuPanPro2009} when stating the limiting distribution behaviour of $C_{m_1,m_2}$.
\end{remark}
\begin{proof}
For a geometric interpretation of the card guessing process, we think of the numbers $m_1$ and $m_2$ as the $(x,y)$-coordinates of a particle in the wedge $x\ge y\ge 0$. 
The evolution of the process is described via weighted lattice paths from $(m_1,m_2)$ to the origin with step sets ``left'' $(-1,0)$ and ``down'' $(0,-1)$. 

Namely, when $m_1 > m_2 \ge 0$, a left step $(m_1,m_2) \to (m_1-1,m_2)$ and a down step $(m_1,m_2) \to (m_1,m_2-1)$, resp., reflect the draw of a card of the first or second type, resp., and appear with probabilities $\frac{m_1}{m_1+m_2}$ and $\frac{m_2}{m_1+m_2}$, respectively. Consequently, a left step corresponds to a correct guess, either, if $m_2=0$, a certified correct guess increasing the counter of $T_{m_1,m_2}$, or, if $m_2 \ge 1$, a more likely correct guess increasing the counter of $L_{m_1,m_2}$. In our generating functions approach, this results in respective weights of the steps. Note that apart from introducing certain weighted left steps, this description simply corresponds to the aforehand mentioned interpretation of diminishing urns in terms of lattice paths.

However, when $m_1 = m_2 \ge 1$, a down step $(m_1,m_1) \to (m_1,m_1-1)$ occurs with probability $1$, which actually combines the draw of a card of the second type and the draw of a card of the first type, both appearing with probability $\frac{1}{2}$, where in the latter case we may think of exchanging the r\^{o}les of the first and second color, such that the amount of cards of the second type never exceeds that one of the first type.
A down step starting at the diagonal increases the counter of $W_{m_1,m_2}$ and, in the generating functions approach, results in a respective weight. As pointed out in Lemma~\ref{lem:P-W-Relation}, each down step from the diagonal corresponds to a Bernoulli trial, which yields with probability $\frac{1}{2}$ a correct pure luck guess.

\medskip

It follows from this geometric interpretation that each step to the left contributes either one to the number of certified correct guesses, which happens exactly if this step is on the $x$-axis, or one to the number of more likely correct guesses, if it is above the $x$-axis, whereas down-steps do not contribute to them. This implies the stated relation between the r.v.\ $L_{m_1,m_2}$ and $T_{m_1,m_2}$.
Moreover, together with Lemma~\ref{lem:P-W-Relation}, yields the stated representation of the number of correct guesses.
\end{proof}

\subsection{Generating function approach}
We define the multivariate generating function\newline
$\varphi_{m_1,m_2} := \varphi_{m_1,m_2}(u_1,u_2,w)$ as the multivariate probability generating function of\newline
$L_{m_1,m_2}$, $T_{m_1,m_2}$ as well as $W_{m_1,m_2}$:
\[
\varphi_{m_1,m_2}(u_1,u_2,w)=\E\big(u_1^{L_{m_1,m_2}}u_2^{T_{m_1,m_2}}w^{W_{m_1,m_2}}\big).
\]
By distinguishing the cases according to the first card drawn, we obtain the recurrence relation
\[
\varphi_{m_1,m_2}=u_1\frac{m_1}{m_1+m_2}\varphi_{m_1-1,m_2}+\frac{m_2}{m_1+m_2}\varphi_{m_1,m_2-1},
\]
with $M=m_1+m_2$, for $m_1>m_2>0$, and with initial values $\varphi_{m,0}=u_2^m$, $m\ge 0$. Moreover, 
\[
\varphi_{m,m}=w\varphi_{m,m-1},\quad m\ge 1.
\]
In order to simplify the analysis, we consider the quantity 
\[
\Phi_{m_1,m_2}(u_1,u_2,w)=\binom{m_1+m_2}{m_1}\varphi_{m_1,m_2}(u_1,u_2,w),
\]
leading to the simplified recurrence relations
\begin{equation}
\Phi_{m_1,m_2}=u_1\Phi_{m_1-1,m_2}+\Phi_{m_1,m_2-1}, \quad m_1>m_2>0,
\label{eq:rec1}
\end{equation}
with $\Phi_{m,0}=u_2^m$, $m\ge 0$. Furthermore, 
\begin{equation}
\Phi_{m,m}=2w\Phi_{m,m-1},\quad m\ge 1.
\label{eq:rec2}
\end{equation}

\smallskip 

Next, we introduce additional generating functions
\[
F(z,x) := F(z, x, u_{1}, u_{2}, w) = \sum_{m_1\ge m_2\ge 0}\Phi_{m_1,m_2}z^{m_1}x^{m_1-m_2}.
\]
We use the additional notation
\[
F_0(z)=F(z,0)=\sum_{m\ge 0}\Phi_{m,m}z^{m},
\]
as well as
\[
F_1(z)=\sum_{m\ge 1}\Phi_{m,m-1}z^{m}.
\]

\begin{prop}
The generating function $F(z,x)$ of weighted multivariate probability generating functions
satisfies the functional equation
\begin{equation}\label{eqn:Fzx_FEQ}
\Big(1-u_1z x-\frac1x \Big)F(z,x)=\frac{(u_2-u_1)zx}{1-u_2zx}+F_0(z)\Big(1-\frac1x\Big)-F_1(z),
\end{equation}
and
\[
F_0(z)=1+2wF_1(z).
\]
\end{prop}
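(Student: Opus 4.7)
The plan is to derive the functional equation by multiplying the recurrences \eqref{eq:rec1} and \eqref{eq:rec2} by $z^{m_1} x^{m_1-m_2}$ and summing, being careful to separate out the values of $\Phi_{m_1,m_2}$ on the boundary of the admissible region $m_1 \ge m_2 \ge 0$ (i.e.\ on the $x$-axis $m_2 = 0$ and on the diagonal $m_1 = m_2$), where the recurrence \eqref{eq:rec1} does not apply.

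First I split
\[
F(z,x) = \underbrace{\sum_{m\ge 0}\Phi_{m,0}z^{m}x^{m}}_{=\,1/(1-u_2zx)} \;+\; \bigl(F_0(z)-1\bigr) \;+\; G(z,x),
\]
where $G(z,x) := \sum_{m_1 > m_2 \ge 1}\Phi_{m_1,m_2}z^{m_1}x^{m_1-m_2}$ is the contribution from the strict interior, on which \eqref{eq:rec1} holds. Applying \eqref{eq:rec1} to $G$ produces two sums. In the first, the reindexing $m_1\mapsto m_1+1$ yields a factor $u_1zx$ multiplying $\sum_{m_1\ge m_2\ge 1}\Phi_{m_1,m_2}z^{m_1}x^{m_1-m_2} = F(z,x) - 1/(1-u_2zx)$. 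In the second, the reindexing $m_2\mapsto m_2+1$ produces a factor $1/x$ multiplying a sum whose index range is $m_1\ge m_2+2$; completing this range to the full region $m_1\ge m_2$ forces the subtraction of the diagonal terms $\sum_{m\ge 0}\Phi_{m,m}z^m = F_0(z)$ and the near-diagonal terms $\sum_{m\ge 0}\Phi_{m+1,m}z^{m+1}x = xF_1(z)$.

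Substituting these back, collecting all $F(z,x)$ terms on the left and simplifying the boundary contributions using $\tfrac{1}{1-u_2zx} - \tfrac{u_1zx}{1-u_2zx} - 1 = \tfrac{(u_2-u_1)zx}{1-u_2zx}$ yields exactly \eqref{eqn:Fzx_FEQ}. The second identity is immediate: from \eqref{eq:rec2} together with $\Phi_{0,0} = u_2^0 = 1$ one obtains
\[
F_0(z) = 1 + \sum_{m\ge 1}\Phi_{m,m}z^m = 1 + 2w\sum_{m\ge 1}\Phi_{m,m-1}z^m = 1 + 2wF_1(z).
\]

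The routine part is the algebraic manipulation; the only genuine source of error is the bookkeeping of boundary contributions in the second index shift, where both the diagonal (producing $F_0$) and the superdiagonal $m_1 = m_2+1$ (producing $F_1$) must be stripped off before converting the partial sum into $F(z,x)$. This is also the structural reason why $F_0$ and $F_1$ appear as auxiliary unknowns in the functional equation, foreshadowing that a kernel-method argument will eventually be needed to determine them.
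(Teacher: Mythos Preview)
Your proof is correct and follows essentially the same approach as the paper: both arguments multiply the recurrence \eqref{eq:rec1} by $z^{m_1}x^{m_1-m_2}$, sum over the interior region $m_1>m_2>0$, and carefully isolate the boundary contributions (the $x$-axis yielding $\frac{u_2zx}{1-u_2zx}$, the diagonal yielding $F_0$, and the superdiagonal yielding $F_1$) through the index shifts. Your explicit decomposition $F = \frac{1}{1-u_2zx} + (F_0-1) + G$ is just a slight organizational variant of the paper's computation, which writes the left-hand side directly as $F - F_0 - \frac{u_2zx}{1-u_2zx}$ and treats the two shifted sums on the right separately; the algebra and the final identity are identical.
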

\begin{proof}
The basic recurrence relation~\eqref{eq:rec1} is readily translated into a functional equation for $F(z,x)$. Summation leads to the equation
\begin{align*}
\sum_{m_1>m_2>0}\Phi_{m_1,m_2}z^{m_1}x^{m_1-m_2}&=u_1\sum_{m_1>m_2>0}\Phi_{m_1-1,m_2}z^{m_1}x^{m_1-m_2}\\
&\quad+\sum_{m_1>m_2>0}\Phi_{m_1,m_2-1}z^{m_1}x^{m_1-m_2}.
\end{align*}
After standard arguments, index shifts and splitting the sum, the left hand side evaluates to
\[
F(z,x)-F_0(z)-\frac{u_2zx}{1-u_2zx}.
\]
The two sums on the right hand side simplify as follows:
\[
u_1\sum_{m_1>m_2>0}\Phi_{m_1-1,m_2}z^{m_1}x^{m_1-m_2}=
u_1zxF(z,x)-\frac{u_1zx}{1-u_2zx}
\]
and 
\[
\sum_{m_1>m_2>0}\Phi_{m_1,m_2-1}z^{m_1}x^{m_1-m_2}=
\frac1x\big(F(z,x)-F_0(z)\big)-F_1(z).
\]
Finally, we note that ~\eqref{eq:rec2} directly implies $F_0(z)=1+2wF_1(z)$. Simplifications then lead to the stated functional equation.
\end{proof}

\begin{lem}\label{lem:GF_Fzx}
The generating function $F(z,x)$ of weighted multivariate probability generating functions
is given by
\[
F(z,x)=\frac{1}{1-x+u_1z x^2}\bigg(\frac{(u_1-u_2)zx^2}{1-u_2zx}+1-x+F_1(z)\big(2w(1-x)+x\big)\bigg),
\]
with 
\[
F_1(z)=\frac{1}{2w(1-\lambda)+\lambda}\cdot\Big(\frac{(u_2-u_1)z\lambda^2}{1-u_2z\lambda}+\lambda-1\Big)
\]
and
\[
\lambda := \lambda(z,u_1)=\frac{1-\sqrt{1-4zu_1}}{2u_1z}.
\]
\end{lem}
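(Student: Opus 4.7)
The plan is to apply the so-called kernel method to the functional equation~\eqref{eqn:Fzx_FEQ} of the previous proposition. First, I would multiply both sides of~\eqref{eqn:Fzx_FEQ} by $-x$ in order to clear the denominator $1/x$ and to bring the equation into the polynomial form
\[
\bigl(1-x+u_1 z x^2\bigr)F(z,x) = \frac{(u_1-u_2)zx^2}{1-u_2 z x} + (1-x)F_0(z) + xF_1(z),
\]
which isolates the kernel $K(z,x) := 1 - x + u_1 z x^2$ on the left-hand side. Using the boundary relation $F_0(z)=1+2wF_1(z)$ then eliminates $F_0(z)$ and yields exactly the bracket $(1-x)+F_1(z)\bigl(2w(1-x)+x\bigr) + (u_1-u_2)zx^2/(1-u_2zx)$ appearing in the claimed formula for $F(z,x)$. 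At this stage, $F_1(z)$ is the only unknown left, and dividing by the kernel produces the asserted explicit expression for $F(z,x)$ as soon as $F_1(z)$ is known.

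To pin down $F_1(z)$, I would solve the kernel equation $K(z,x)=u_1 z x^2 - x + 1 = 0$ for $x$ in terms of $z$. This quadratic has two branches; the unique branch that is analytic at $z=0$ (behaving as $\lambda(z)=1+u_1 z+\ldots$) is precisely
\[
\lambda(z,u_1) = \frac{1-\sqrt{1-4u_1 z}}{2u_1 z}.
\]
Since $\lambda(z)$ is a formal power series in $z$ with no pole at the origin, and since $F(z,x)$, viewed as a formal power series in $z$ whose coefficients are polynomials in $x$, remains well-defined upon substituting $x=\lambda(z)$, we may legally plug $x=\lambda$ into the rewritten functional equation. The left-hand side then vanishes identically, which forces the right-hand side to vanish as well, i.e.\
\[
\frac{(u_1-u_2)z\lambda^2}{1-u_2 z\lambda} + (1-\lambda) + F_1(z)\bigl(2w(1-\lambda)+\lambda\bigr) = 0.
\]
Solving this linear equation for $F_1(z)$ immediately produces the stated closed form, the only thing to check being that the prefactor $2w(1-\lambda)+\lambda$ does not vanish as a formal power series (which is obvious since it reduces to $1$ at $z=0$).

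The verification is essentially algebraic, so the main technical point is the legitimacy of the kernel-method substitution: one has to confirm that the chosen small branch $\lambda(z,u_1)$ really is a formal power series in $z$ and that substituting it into $F(z,x)$ is meaningful termwise. Once that is established, the rest is routine manipulation: substitute $x=\lambda$, solve for $F_1(z)$, and plug the result back into the expression
\[
F(z,x) = \frac{1}{1-x+u_1 z x^2}\left(\frac{(u_1-u_2)zx^2}{1-u_2 z x} + 1 - x + F_1(z)\bigl(2w(1-x)+x\bigr)\right)
\]
to obtain the claimed formula. I expect no further obstacle beyond this kernel-method justification.
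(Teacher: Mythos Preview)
Your proposal is correct and follows essentially the same approach as the paper: multiply the functional equation by $-x$, identify the kernel $1-x+u_1zx^2$, substitute the small root $x=\lambda(z,u_1)$ to kill the left-hand side, and solve the resulting linear relation (together with $F_0=1+2wF_1$) for $F_1(z)$. Your extra care in justifying the choice of branch and the legitimacy of the substitution is a welcome elaboration, but the method is identical to the paper's.
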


\begin{proof}
We use a basic application of the so-called kernel method, see, e.g., \cite{BanderierFlajolet2002,Prodinger2004}. First, we multiply the whole equation~\eqref{eqn:Fzx_FEQ} by $(-x)$.
The kernel $K(z,x,u_1)=1-x+u_1z x^2$ is canceled by the power series $x=\lambda=\lambda(z,u_1)$ stated in the formulation of the lemma:  
\[
K\big(z,\lambda(z,u_1),u_1\big)=0.
\]
Thus, by plugging $x=\lambda$ into the equation, the left-hand side and so also the right-hand side vanish, which implies the additional relation 
\[
0=\frac{(u_1-u_2)z\lambda^2}{1-u_2z\lambda}+\big(1-\lambda\big)F_0(z)+\lambda F_1(z).
\]
Using $F_0(z)=1+2wF_1(z)$ then directly leads to the stated results.
\end{proof}

Due to the dependence of the r.v.\ $L_{m_1,m_2}$ and $T_{m_1,m_2}$ stated in \eqref{eqn:L-T-Relation}, we may restrict ourselves to a joint analysis of $T_{m_1,m_2}$ and $W_{m_1,m_2}$. For extracting coefficients, we find it convenient to introduce the generating function
\begin{multline}
  \hat{F}(z,y) := \hat{F}(z,y,u,w) = \sum_{m_{1} \ge m_{2} \ge 0} \binom{m_{1} + m_{2}}{m_{1}} \E\big(u^{T_{m_1,m_2}} w^{W_{m_1,m_2}}\big) z^{m_1} y^{m_2}\\
	= \sum_{m_{1} \ge 0} \sum_{m_{2}=0}^{m_{1}} \sum_{k=0}^{m_{2}} \sum_{\ell=0}^{m_{1}} \binom{m_1+m_2}{m_1} \P\big\{W_{m_1,m_2}=k \wedge T_{m_1,m_2}=\ell \big\} w^{k} u^{\ell} z^{m_{1}} y^{m_{2}},\label{eqn:Fzy_Def}
\end{multline}
which is obtained from $F(z,x,u_{1},u_{2},w)$ via $\hat{F}(z,y,u,w) = F(zy, y^{-1}, 1, u, w)$. Note that the natural restrictions $0 \le \ell \le m_1$, $0 \le k \le m_2$ easily follow from the geometric interpretation given in Section~\ref{sec:Geometric_interpretation}.

Starting with Lemma~\ref{lem:GF_Fzx}, we easily get the following representation of $\hat{F}(z,y)$:
\begin{equation}\label{eqn:GF_Fzy}
\begin{split}
  \hat{F}(z,y) & = \frac{1-z}{(1-y-z)(1-uz)} - \frac{y}{1-y-z}\\
	& \quad \mbox{} + \frac{uzy(2w(1-y)-1)}{(1-y-z)(1-2wP(zy))(1-uP(zy))},
\end{split}
\end{equation}
with
\begin{equation*}
  P(q) = \frac{1-\sqrt{1-4q}}{2} = \sum_{n \ge 1} \frac{1}{n} \binom{2n-2}{n-1} q^{n}
\end{equation*}
denoting the generating function of shifted Catalan numbers, satisfying the functional equation
\begin{equation*}
  P(q) = \frac{q}{1-P(q)}.
\end{equation*}

Extracting coefficients from $\hat{F}(z,y)$ yields an explicit formula for the joint distribution of $W_{m_1,m_2}$ and $T_{m_1,m_2}$ (and, in view of the stated dependency \eqref{eqn:L-T-Relation}, also of $L_{m_1,m_2}$) given next.
\begin{theorem}\label{thm:WT_exact_joint}
  The joint distribution of the random variables $W_{m_1,m_2}$ and $T_{m_1,m_2}$ is, for $0 \le m_{2} \le m_{1}$, $0 \le \ell \le m_{1}$, and $0 \le k \le m_{2}$, given as follows, outside this range the probabilities are zero, anyway.
  \begin{multline*}
	  \P\big\{W_{m_1,m_2}=k \wedge T_{m_1,m_2}=\ell\big\} =\\ 
		\begin{cases} 
		\frac{\binom{m_1+m_2-\ell-1}{m_2-1}-\binom{m_1+m_2-\ell-1}{m_1-1}}{\binom{m_1+m_2}{m_1}}, & \quad \text{$k=0$ and $m_{2}, \ell \ge 1$},\\[2ex]
		\frac{2^{k} \binom{m_1+m_2-\ell-k}{m_1-1} - 2^{k+1} \binom{m_1+m_2-\ell-k-1}{m_1-1}}{\binom{m_1+m_2}{m_1}}, & \quad \text{$\ell,k \ge 1$ and $(m_1,m_2) \neq (\ell,k)$},\\[2ex]
		1, & \quad \text{$k=m_{2}=0$ and $\ell = m_{1}$},\\[0.5ex]
		1, & \quad \text{$m_{1} = m_{2} = k = \ell = 1$},\\[0.5ex]
		0, & \quad \text{otherwise}.
		\end{cases}
	\end{multline*}
\end{theorem}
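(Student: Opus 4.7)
\medskip

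The plan is to extract the coefficient $[w^{k} u^{\ell} z^{m_1} y^{m_2}]\hat{F}(z,y,u,w)$ directly from the closed form of $\hat{F}$ given in \eqref{eqn:GF_Fzy} and then divide by $\binom{m_1+m_2}{m_1}$, according to the weighted definition \eqref{eqn:Fzy_Def}. I would split $\hat F$ into the three summands
\[
\hat{F}_A = \frac{1-z}{(1-y-z)(1-uz)}, \qquad \hat{F}_B = -\frac{y}{1-y-z},
\]
\[
\hat{F}_C = \frac{u z y\bigl(2w(1-y)-1\bigr)}{(1-y-z)\bigl(1-2wP(zy)\bigr)\bigl(1-uP(zy)\bigr)},
\]
and treat them separately. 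Observe that $\hat{F}_A$ and $\hat{F}_B$ are independent of $w$, so they only feed into the slice $k=0$, whereas $\hat{F}_C$ controls both the general $k\ge 1$ case and, via its value at $w=0$, the missing piece of the $k=0$ row.

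For $\hat{F}_A$ and $\hat{F}_B$ coefficient extraction is routine: expanding $\frac{1}{1-y-z} = \sum_{a,b\ge 0} \binom{a+b}{a} z^{a} y^{b}$ and $\frac{1}{1-uz} = \sum_{\ell\ge 0} u^\ell z^\ell$ and applying Pascal's identity yields, for $m_{2},\ell \ge 1$,
\[
[z^{m_1} y^{m_2} u^{\ell}]\hat{F}_A = \binom{m_1+m_2-\ell-1}{m_2-1}, \qquad [z^{m_1} y^{m_2}]\hat{F}_B = -\binom{m_1+m_2-1}{m_2-1}.
\]
For $\hat{F}_C$, I would first perform the partial fraction decomposition
\[
\frac{1}{(1-2wP)(1-uP)} = \frac{1}{2w-u}\left(\frac{2w}{1-2wP} - \frac{u}{1-uP}\right)
\]
in the variable $P = P(zy)$, then expand each factor as a geometric series and apply the classical ballot/Catalan identity
\[
[q^{n}] P(q)^{j} = \frac{j}{n}\binom{2n-j-1}{n-j}, \qquad n\ge j\ge 1.
\]
Because $P(zy)$ depends only on the product $q=zy$, each resulting monomial $q^{n}$ combined with $\frac{1}{1-y-z}$ reduces to a clean binomial convolution on $m_1-m_2$ which evaluates in closed form.

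The main obstacle I expect is the bookkeeping for $\hat F_C$: the numerator $2w(1-y)-1$ produces three contributions which, after the partial-fraction split, each telescope differently in the summation index $n$. The bulk of the work is showing that the pieces indexed by the $2w$-series and the $u$-series combine so that the apparent pole at $2w=u$ cancels, and that the remaining telescoping collapses to the stated expression $2^{k}\binom{m_1+m_2-\ell-k}{m_1-1} - 2^{k+1}\binom{m_1+m_2-\ell-k-1}{m_1-1}$. Once this is done, the degenerate configurations listed in the theorem, namely $k=m_2=0,\ \ell=m_1$; $m_1=m_2=k=\ell=1$; and the exceptional case $(m_1,m_2)=(\ell,k)$ with $\ell,k\ge 1$, must be verified individually, either by direct inspection of the recurrences \eqref{eq:rec1}--\eqref{eq:rec2} or by tracing which boundary terms in the coefficient extraction drop out of the generic formula. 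Positivity and the correct support $0\le \ell\le m_1$, $0\le k\le m_2$ follow automatically from the geometric interpretation of Section~\ref{sec:Geometric_interpretation}.
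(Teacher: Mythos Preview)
Your outline follows the same overall route as the paper---split $\hat F$ into the three summands and extract $[u^\ell w^k]$ first---but there is a genuine gap at the point you call ``a clean binomial convolution on $m_1-m_2$ which evaluates in closed form.'' After you strip off $u$ and $w$ you are left with coefficients of the form $[z^{m_1}y^{m_2}]\dfrac{P(zy)^{r}}{1-y-z}$, and expanding $P^{r}=\sum_{n}c_{n}^{(r)}q^{n}$ gives the sum $\sum_{n}c_{n}^{(r)}\binom{m_1+m_2-2n}{m_1-n}$. This does \emph{not} close up by a routine Vandermonde-type manipulation: if you try the natural trick $\binom{N-2n}{M-n}=[x^{M-n}](1+x)^{N-2n}$ and sum the resulting series using $P\bigl(x/(1+x)^{2}\bigr)=x/(1+x)$, you obtain $\binom{m_1+m_2-r}{m_1-r}$, which is wrong (test $r=1$, $m_1=3$, $m_2=2$: the correct value is $4$, not $6$). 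The failure is that once $n>m_2$ the exponent $N-2n$ becomes negative, and the formal series $(1+x)^{N-2n}$ contributes spurious terms that do not arise from the honest bivariate expansion of $\tfrac{1}{1-y-z}$.

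The paper's way around this, which is the missing idea in your proposal, is to first decompose
\[
\frac{1}{1-y-z}=S^{+}(z,y)+S^{-}(z,y),
\]
where $S^{+}$ carries exactly the coefficients with $m_1\ge m_2$, via the factorization $z^{2}-z+q=(z-P(q))(z-(1-P(q)))$ with $q=zy$ and partial fractions in $z$. On the relevant half one has $[z^{m_1-m_2}]S^{+}=(1-2P)^{-1}(1-P)^{-(m_1-m_2)}$, and then a Lagrange-inversion/residue step in the variable $P$ (using $q=P(1-P)$, $dq=(1-2P)\,dP$) delivers the single sum-free identity
\[
[z^{m_1}y^{m_2}]\frac{P(zy)^{r}}{1-y-z}=\binom{m_1+m_2-r}{m_1},
\]
from which all the cases in the theorem follow by substitution. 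Incidentally, once you have this identity, extracting $[u^\ell w^k]$ directly from $\frac{1}{(1-2wP)(1-uP)}=\sum_{i,j\ge 0}(2w)^{i}u^{j}P^{i+j}$ is simpler than your partial-fraction route, which introduces an apparent pole at $2w=u$ that then has to be argued away.
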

\begin{proof}
A key r\^{o}le in our approach to get sum-free expressions when extracting coefficients from equation~\eqref{eqn:GF_Fzy} plays the following split of the bivariate generating function $\frac{1}{1-y-z} = \sum_{m_{1}, m_{2} \ge 0} f_{m_1,m_2} z^{m_{1}} y^{m_{2}}$, with $f_{m_1,m_2} = \binom{m_1+m_2}{m_1}$, into generating functions $S^{+}(z,y)$ and $S^{-}(z,y)$, where $S^{+}(z,y)$ only contains the terms $f_{m_1,m_2} z^{m_1} y^{m_2}$ with $m_{1} \ge m_{2} \ge 0$, and $S^{-}(z,y)$ only the ones with $m_{2} > m_{1} \ge 0$:
\begin{equation}
  \frac{1}{1-y-z} = \underbrace{\frac{1}{(1-2P(zy)) \big(1-\frac{z}{1-P(zy)}\big)}}_{=: S^{+}(z,y)} + \underbrace{\frac{P(zy)}{z(1-2P(zy))\big(1-\frac{P(zy)}{z}\big)}}_{=: S^{-}(z,y)}.
\end{equation}
This representation can be obtained easily by setting $q:=zy$ and carrying out a partial fraction expansion of the resulting factorization:
\begin{align*}
  \frac{1}{1-y-z} & = \frac{1}{1-\frac{q}{z}-z} = \frac{-z}{z^{2}-z+q} = \frac{-z}{(z-P(q))\big(z-(1-P(q))\big)}\\
	& = \frac{1}{(1-2P(q))\big(1-\frac{z}{1-P(q)}\big)} + \frac{P(q)}{z(1-2P(q))\big(1-\frac{P(q)}{z}\big)}.
\end{align*}
Since we are interested in the coefficients of $z^{m_1} y^{m_2}$, with $m_{1} \ge m_{2} \ge 0$, of the generating functions occurring, we may reduce the task of extracting coefficients to expressions, where we replace $\frac{1}{1-y-z}$ by $S^{+}(z,y)$. In particular, we use this replacement together with an application of the formal residue calculus (or alternatively, Cauchy's integration formula), see, e.g., \cite{GouldenJackson1983}, to extract coefficients of the following expression.

Let $m_{1} \ge m_{2} \ge b+r \ge b \ge a \ge 0$. Setting $q=zy$ and also taking into account $q=P(q)(1-P(q))$ and $\frac{dq}{dP} = 1-2P$, we get:
\begin{align}
  & [z^{m_1-a} y^{m_2-b}] \frac{P(zy)^{r}}{1-y-z}  = [z^{m_1-m_2+b-a} q^{m_2-b}] \frac{P(q)^{r}}{1-y-z}\notag\\
	& \quad = [z^{m_1-m_2+b-a} q^{m_2-b}] \frac{P(q)^{r}}{(1-2P(q))\big(1-\frac{z}{1-P(q)}\big)}\notag\\
	& \quad = [q^{m_2-b}] \frac{P(q)^{r}}{(1-2P(q)) \big(1-P(q)\big)^{m_1-m_2+b-a}}\notag\\
	& \quad = [q^{-1}] \frac{1}{P(q)^{m_2-b+1-r} (1-2P(q)) (1-P(q))^{m_1-a+1}}\notag\\
	& \quad = [P^{-1}] \frac{1-2P}{P^{m_2-b+1-r} (1-2P)(1-P)^{m_1-a+1}} = [P^{m_2-b-r}] \frac{1}{(1-P)^{m_1-a+1}}\notag\\
	& \quad = \binom{m_1+m_2-a-b-r}{m_1-a}.\label{eqn:coeff_extraction1}
\end{align}

With the latter result, we are ready to extract coefficients $[z^{m_1} y^{m_2} w^{k} u^{\ell}]$ from \eqref{eqn:GF_Fzy}, where we recall the natural restrictions $m_{1} \ge m_{2} \ge 0$, $0 \le \ell \le m_{1}$, and $0 \le k \le m_{2}$. We distinguish between the cases $k \ge 1$ and $k=0$.

For $k \ge 1$, it necessarily also must hold $\ell \ge 1$ in order to get non-zero coefficients, where we obtain
\begin{align*}
  [u^{\ell}w^{k}] \hat{F}(z,y,u,w) & = [u^{\ell}w^{k}] \frac{uzy(2w(1-y)-1)}{(1-y-z)(1-2wP(zy))(1-uP(zy))}\\
	& = [w^{k}] \frac{zy(2w(1-y)-1)P(zy)^{\ell-1}}{(1-y-z)(1-2wP(zy))}\\
	& = \frac{2^k zy}{1-y-z} \Big((1-y)P(zy)^{\ell+k-2} - P(zy)^{\ell+k-1} \Big),
\end{align*}
thus
\begin{multline*}
  [z^{m_1}y^{m_2}u^{\ell}w^{k}] \hat{F}(z,y,u,w) = 2^{k} [z^{m_1-1}y^{m_2-1}] \frac{P(zy)^{\ell+k-2}}{1-y-z}\\
	\mbox{} - 2^{k} [z^{m_1-1}y^{m_2-2}] \frac{P(zy)^{\ell+k-2}}{1-y-z} - 2^{k} [z^{m_1-1}y^{m_2-1}] \frac{P(zy)^{\ell+k-1}}{1-y-z}.
\end{multline*}
Using \eqref{eqn:coeff_extraction1}, one gets the explicit formula
\begin{equation}\label{eqn:Fzy_coeff_extract1}
  [z^{m_1}y^{m_2}u^{\ell}w^{k}] \hat{F}(z,y,u,w) = 2^{k} \binom{m_1+m_2-\ell-k}{m_1-1} - 2^{k+1} \binom{m_1+m_2-\ell-k-1}{m_1-1},
\end{equation}
which holds for $1 \le \ell \le m_{1}$, $1 \le k \le m_{2}$ and $(m_1,m_2) \neq (\ell,k)$. For $(m_1,m_2)=(\ell,k)$ one can check easily that $[z^{1}y^{1}u^{1}w^{1}] \hat{F}(z,y,u,w) = 2$, and zero, otherwise.

For $k=0$, we first obtain
\begin{align*}
  [w^{0}]\hat{F}(z,y,u,w) & = \hat{F}(z,y,u,0)\\
	& = \frac{1-z}{(1-y-z)(1-uz)} - \frac{y}{1-y-z} - \frac{uzy}{(1-y-z)(1-uP(zy))}.
\end{align*}
We further distinguish between $\ell=0$ and $\ell \ge 1$. The case $\ell=0$ yields
\begin{equation*}
  [u^{0}w^{0}]\hat{F}(z,y,u,w) = \hat{F}(z,y,0,0) = 1,
\end{equation*}
thus $[z^{m_1}y^{m_2}u^{0}w^{0}]\hat{F}(z,y,u,w) = 1$, for $m_1=m_2=0$, and the coefficients are zero, otherwise.

For the case $\ell \ge 1$, we proceed with
\begin{equation*}
  [u^{\ell}w^{0}]\hat{F}(z,y,u,w) = \frac{(1-z) z^{\ell}}{1-y-z} - \frac{zy P(zy)^{\ell-1}}{1-y-z}.
\end{equation*}
Since
\begin{align*}
  [z^{m_1}y^{m_2}]\frac{(1-z)z^{\ell}}{1-y-z} & = [z^{m_1-\ell}y^{m_2}]\frac{1}{1-\frac{y}{1-z}} = [z^{m_1-\ell}] \frac{1}{(1-z)^{m_2}}\\
	& = \binom{m_1+m_2-\ell-1}{m_2-1},
\end{align*}
and, by an application of \eqref{eqn:coeff_extraction1},
\begin{equation*}
  [z^{m_1}y^{m_2}]\frac{zy P(zy)^{\ell-1}}{1-y-z} = \binom{m_1+m_2-\ell-1}{m_1-1},
\end{equation*}
we thus get the explicit formula
\begin{equation}\label{eqn:Fzy_coeff_extract2}
  [z^{m_1}y^{m_2}u^{\ell}w^{0}]\hat{F}(z,y,u,w) = \binom{m_1+m_2-\ell-1}{m_2-1} - \binom{m_1+m_2-\ell-1}{m_1-1},
\end{equation}
which holds for $\ell, m_{2} \ge 1$. Moreover, $[z^{m_1}y^{0}u^{m_1}w^{0}] \hat{F}(z,y,u,w) = 1$, and the coefficients are zero, otherwise.

The exact formul{\ae} for the joint distribution of $W_{m_1,m_2}$ and $T_{m_1,m_2}$ follow then from \eqref{eqn:Fzy_coeff_extract1}, \eqref{eqn:Fzy_coeff_extract2}, together with the particular cases mentioned, by taking into account the defining equation~\eqref{eqn:Fzy_Def}.
\end{proof}

From this explicit result the marginal distributions can be obtained easily.
\begin{coroll}\label{cor:Marginal_dist}
  The exact distributions of the random variables $T_{m_1,m_2}$ and $W_{m_1,m_2}$, respectively, are, for $0 \le m_{2} \le m_{1}$, and $0 \le \ell \le m_{1}$ or $0 \le k \le m_{2}$, respectively, given as follows, outside this range the probabilities are zero, anyway.
  \begin{align*}
	  \P\big\{T_{m_1,m_2}=\ell\big\} & = 
		\begin{cases} 
		\frac{\binom{m_1+m_2-\ell-1}{m_2-1}+\binom{m_1+m_2-\ell-1}{m_1-1}}{\binom{m_1+m_2}{m_1}}, & \quad \text{$m_{2}, \ell \ge 1$},\\[2ex]
		1, & \quad \text{$m_{2}=0$ and $\ell = m_{1}$},\\[0.5ex]
		0, & \quad \text{otherwise}.
		\end{cases}\\
		\P\big\{W_{m_1,m_2}=k\big\} & = 
		\begin{cases} 
		\frac{2^{k} \binom{m_1+m_2-k}{m_1} - 2^{k+1} \binom{m_1+m_2-k-1}{m_1}}{\binom{m_1+m_2}{m_1}}, & \quad \text{$m_{1} \ge 1$},\\[2ex]
		1, & \quad \text{$m_{1} = m_{2} = k = 0$},\\[0.5ex]
		0, & \quad \text{otherwise}.
		\end{cases}
	\end{align*}
\end{coroll}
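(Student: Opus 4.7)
The plan is to obtain both marginals by summing out the complementary variable in the joint distribution provided by Theorem~\ref{thm:WT_exact_joint}. The degenerate cases $m_{2}=0$ (with $T_{m_1,m_2}=m_1$ and $W_{m_1,m_2}=0$ almost surely) and $m_{1}=0$ can be read off directly.

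For the marginal of $T_{m_1,m_2}$ I would fix $m_{2},\ell\ge 1$ and write $\P\{T_{m_1,m_2}=\ell\}=\sum_{k=0}^{m_2}\P\{W_{m_1,m_2}=k,\,T_{m_1,m_2}=\ell\}$. The key observation is that, after clearing the denominator $\binom{m_1+m_2}{m_1}$, the $k\ge 1$ summands have the telescopic form $2^{k}c_{k}-2^{k+1}c_{k+1}$ with $c_{k}:=\binom{m_1+m_2-\ell-k}{m_1-1}$. Hence $\sum_{k=1}^{m_2}(2^{k}c_{k}-2^{k+1}c_{k+1})=2c_{1}-2^{m_2+1}c_{m_2+1}$, and the boundary term vanishes because $c_{m_2+1}=\binom{m_1-\ell-1}{m_1-1}=0$ for $\ell\ge 1$. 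Combining the residual $2\binom{m_1+m_2-\ell-1}{m_1-1}$ with the $k=0$ contribution flips the sign of $\binom{m_1+m_2-\ell-1}{m_1-1}$ from $-$ to $+$, producing exactly the claimed expression.

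For the marginal of $W_{m_1,m_2}$ I would fix $k$ with $m_{1}\ge 1$ and sum the joint distribution over $\ell$. In every case the task reduces to evaluating sums of the form $\sum_{\ell=1}^{m_1}\binom{m_1+m_2-\ell-j}{m_1-1}$ which, via the substitution $i=m_1+m_2-\ell-j$ and the hockey-stick identity $\sum_{i=a}^{b}\binom{i}{n}=\binom{b+1}{n+1}-\binom{a}{n+1}$, evaluate to $\binom{m_1+m_2-j}{m_1}-\binom{m_2-j}{m_1}$. Under the standing assumption $m_{2}\le m_{1}$ the second binomial vanishes for $j\ge 1$; for $k=0$ a single application of Pascal's rule rewrites $\binom{m_1+m_2-1}{m_1-1}$ as $\binom{m_1+m_2}{m_1}-\binom{m_1+m_2-1}{m_1}$, bringing the answer into the announced shape. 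I do not expect a serious obstacle; the only subtle point is to verify that the excluded joint-distribution value $(m_1,m_2)=(\ell,k)$ does not corrupt the marginal sums, which holds because, under the convention $\binom{n}{k}=0$ for $n<k$, the main formula already returns the correct probability there.
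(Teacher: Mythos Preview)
Your proposal is correct and follows exactly the approach the paper takes: telescoping over $k$ for the $T$-marginal and the hockey-stick identity $\sum_{\ell=0}^{n}\binom{\ell}{m}=\binom{n+1}{m+1}$ over $\ell$ for the $W$-marginal. Your treatment is in fact more detailed than the paper's two-line proof, and your remark that the excluded corner $(\ell,k)=(m_1,m_2)$ is harmless under the combinatorial convention $\binom{n}{k}=0$ for $n<k$ is a useful clarification that the paper leaves implicit. One small wording issue: your claim that ``in every case'' the sums have lower index $m_1-1$ is not literally true for $k=0$, where one of the two sums has lower index $m_2-1$; but the same hockey-stick evaluation applies, and your subsequent Pascal-rule step handles it, so this is purely presentational.
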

\begin{proof}
  The stated results for the marginal distributions follow from Theorem~\ref{thm:WT_exact_joint} by summation. For the probability mass function of $W_{m_1,m_2}$ one just has to apply the basic summation formula $\sum_{\ell=0}^{n} \binom{\ell}{m} = \binom{n+1}{m+1}$, whereas the corresponding result for $T_{m_1,m_2}$ follows from the fact that the sum running over $k$ telescopes. 
\end{proof}

\section{Limit laws}

\subsection{Limiting behaviour of the marginal distributions}
We first state the limiting behaviour of the individual r.v.\ $W_{m_1,m_2}$ and $T_{m_1,m_2}$, respectively, for $m_{1} \to \infty$ and depending on the growth behaviour of $m_{2}$. We exclude the case $m_{2}=0$, since then $W_{m_1,m_2}$ and $T_{m_1,m_2}$ have deterministic distributions.
\begin{theorem}\label{thm:T_limit}
The limiting distribution behaviour of the random variable $T_{m_1,m_2}$ is, for $0 \le m_{2} \le m_{1}$ and $m_{1} \to \infty$, depending on the growth behaviour of $m_{2}$ given as follows.
\begin{itemize}
\item $m_{2} \ge 1$ fixed: suitably scaled, $T_{m_1,m_2}$ converges in distribution to a r.v.~ $Y^{[m_2]}$, which is characterized via the distribution function
\begin{equation*}
  F(y) = 1-(1-y)^{m_{2}}, \quad 0 < y < 1.
\end{equation*}

It holds that $Y^{[m_{2}]}$ is Beta distributed with parameters $1$ and $m_2$, or alternatively, distributed as the minimum of $m_{2}$ independent uniform distributed r.v.\ on the interval $[0,1]$:
\begin{gather*}
  \frac{T_{m_1,m_2}}{m_{1}} \claw Y^{[m_2]}, \quad Y^{[m_{2}]} \law \BetaDist(1,m_2) \law \min\{U_{1}, U_{2}, \dots, U_{m_2}\},\\
	\text{where $U_{i} \law \Unif[0,1]$, $1 \le i \le m_{2}$, and the $U_{i}$ mutually independent}.
\end{gather*}

\item $m_{2} = o(m_{1})$, but $m_{2} \to \infty$: suitably scaled, $T_{m_{1},m_{2}}$ is asymptotically exponential distributed with parameter $1$:
\begin{gather*}
  \frac{m_{2} \cdot T_{m_1,m_2}}{m_{1}} \claw Y, \quad Y \law \Exp(1),\\
	\text{i.e., $Y$ has the distribution function $F(y) = 1-e^{-y}$, $y > 0$}.
\end{gather*}

\item $m_{2} \sim \rho \, m_{1}$, with $0 < \rho \le 1$: $T_{m_1,m_2}$ converges in distribution to a discrete r.v.\ $Y_{\rho}$, whose probability mass function is given as follows:
\begin{equation*}
  p_{\ell}^{[Y]} := \P\{Y_{\rho} = \ell\} = \frac{\rho+\rho^{\ell}}{(1+\rho)^{\ell+1}}, \quad \ell \ge 1.
\end{equation*}
It holds that $Y_{\rho}$ is the mixture of two geometrically distributed random variables:
\begin{gather*}
  Y_{\rho} \law I_{1} \cdot \Geom\Big(\frac{\rho}{1+\rho}\Big) + (1-I_{1}) \cdot \Geom\Big(\frac{1}{1+\rho}\Big),\\
	\text{with $I_{1} \law$ $\Bernoulli\Big(\frac{1}{1+\rho}\Big)$}.
\end{gather*}
\end{itemize}
\end{theorem}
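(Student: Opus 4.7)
My plan is to read off all three limit laws directly from the exact marginal pmf of $T_{m_1,m_2}$ in Corollary~\ref{cor:Marginal_dist}, via asymptotic analysis of the two binomial ratios. A preparatory step that separates the regimes cleanly is to rewrite both ratios as telescoping products,
\begin{align*}
\frac{\binom{m_1+m_2-\ell-1}{m_2-1}}{\binom{m_1+m_2}{m_1}} & = \frac{m_2}{m_1+m_2-\ell}\prod_{j=0}^{\ell-1}\frac{m_1-j}{m_1+m_2-j},\\
\frac{\binom{m_1+m_2-\ell-1}{m_1-1}}{\binom{m_1+m_2}{m_1}} & = \frac{m_1}{m_1+m_2-\ell}\prod_{j=0}^{\ell-1}\frac{m_2-j}{m_1+m_2-j}.
\end{align*}
The three cases of the theorem then correspond to three natural substitutions for $\ell$.

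For the fixed-$m_2$ case I would set $\ell=\lfloor y m_1\rfloor$ with $y\in(0,1)$. Taking the logarithm of the first product yields $m_2\log(1-y)+o(1)$, and combined with the prefactor $\frac{m_2}{m_1+m_2-\ell}\sim\frac{m_2}{m_1(1-y)}$, the first summand equals $\frac{m_2(1-y)^{m_2-1}}{m_1}(1+o(1))$. The second summand is in fact zero as soon as $\ell>m_2$. The local limit $\P\{T_{m_1,m_2}=\lfloor y m_1\rfloor\}\sim\frac{1}{m_1}\cdot m_2(1-y)^{m_2-1}$ is a Riemann sum for the $\BetaDist(1,m_2)$ density, which yields $T_{m_1,m_2}/m_1\claw\BetaDist(1,m_2)$; the identity $\BetaDist(1,m_2)\law\min\{U_1,\dots,U_{m_2}\}$ is classical.

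For the heavy regime $m_2\sim\rho m_1$ with $0<\rho\le 1$ and $\ell\ge 1$ fixed, each factor in both products converges termwise: $\prod_{j=0}^{\ell-1}\frac{m_1-j}{m_1+m_2-j}\to(1+\rho)^{-\ell}$ and $\prod_{j=0}^{\ell-1}\frac{m_2-j}{m_1+m_2-j}\to\rho^\ell(1+\rho)^{-\ell}$, while the prefactors tend to $\rho/(1+\rho)$ and $1/(1+\rho)$. Summing the two contributions gives $p_\ell^{[Y]}=(\rho+\rho^\ell)/(1+\rho)^{\ell+1}$, and since $\sum_{\ell\ge 1}p_\ell^{[Y]}=1$, pointwise pmf convergence already delivers convergence in distribution. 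The mixture representation is obtained by matching each of the two summands with a scaled geometric pmf, which fixes the mixing weights $\frac{1}{1+\rho}$ and $\frac{\rho}{1+\rho}$.

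The intermediate regime $m_2\to\infty$ with $m_2=o(m_1)$ is the non-routine case and the main obstacle of the proof. Setting $\ell=\lfloor xm_1/m_2\rfloor$ with $x>0$, a uniform Taylor expansion $\log\prod_{j=0}^{\ell-1}\frac{m_1-j}{m_1+m_2-j}=-\sum_{j=0}^{\ell-1}\bigl(\frac{m_2}{m_1+m_2-j}+O(m_2^2/m_1^2)\bigr)\to -x$ gives the first summand as $\frac{m_2}{m_1}e^{-x}(1+o(1))$, while the second is bounded by a constant multiple of $(m_2/m_1)^\ell$ and is therefore exponentially smaller whenever $\ell\to\infty$. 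The care required is to make the expansion uniform on compact $x$-intervals and to confirm that the boundary $\ell$'s (where $\ell$ stays bounded, i.e.\ $x\to 0$) contribute negligibly; once this is in place, the Riemann sum $\sum_\ell\P\{T_{m_1,m_2}=\ell\}\varphi(m_2\ell/m_1)$ converges to $\int_0^\infty\varphi(x)e^{-x}\,dx$ for every bounded continuous $\varphi$, giving $m_2 T_{m_1,m_2}/m_1\claw\Exp(1)$.
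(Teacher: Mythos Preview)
Your proposal is correct and follows essentially the same route as the paper: start from the exact marginal pmf in Corollary~\ref{cor:Marginal_dist} and carry out a case-by-case asymptotic analysis of the two binomial ratios. The paper's own proof is in fact only a pointer (``analogous to Section~\ref{sec:Joint_limit_laws}''), and the detailed computations there proceed via falling-factorial identities such as $\binom{m_1+m_2-\ell-1}{m_2-1}/\binom{m_1+m_2}{m_1}=m_2\,m_1^{\underline{\ell}}/(m_1+m_2)^{\underline{\ell+1}}$, combined with Stirling's formula, and are organized around the cumulative distribution function rather than the pmf. Your telescoping-product rewriting is exactly the same identity, and your local-limit/Riemann-sum packaging is a harmless variant of the CDF argument; the only extra care it requires, which you correctly flag, is uniformity on compacta and negligibility of the boundary, both of which are routine here.
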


\begin{remark}
Note that in the case $m_{2} \sim m_{1}$, above characterization yields a mixture of two identically distributed geometric random variables, thus for $\rho = 1$ it simply holds
\begin{equation*}
  T_{m_1,m_2} \claw Y_{1}, \quad \text{with $Y_{1} \law$ $\Geom\big(\textstyle{\frac{1}{2}}\big)$}.
\end{equation*}
\end{remark}

\begin{remark}
As discussed at the beginning, the limit laws of $T_{m_1,m_2}$ closely resemble the limit laws of the sampling without replacement urn absorbed at the boundaries~\cite{KubPanClass}, as one might expect. A heuristic explanation for the limit laws: for small $m_{2}$, i.e. $m_2 \ge 1$ fixed or $m_{2} = o(m_{1})$, but $m_{2} \to \infty$, the influence of the diagonal $y=x$ is negligible, but it gets more significant once $m_2$ and $m_1$ are of comparable size. In contrast to sampling without replacement, the limit law of $T_{m_1,m_2}$ cannot degenerate (which happens for the sampling urn, if $m_2 \gg m_1$), as we stay in the cone $x\ge y\ge 0$. This heuristic can be made precise, since $T_{m_1,m_2}$ is distributed as the distance from the origin, where a particle, starting at $(m_1,m_2)$ and moving according to a sampling without replacement urn, either hits the $x$-axis or the $y$-axis.
\end{remark}

\begin{theorem}\label{thm:W_limit}
The limiting distribution behaviour of the random variable $W_{m_1,m_2}$ is, for $0 \le m_{2} \le m_{1}$ and $m_{1} \to \infty$, depending on the growth behaviour of $m_{2}$ given as follows.
\begin{itemize}
\item $m_{2} = o(m_{1})$: $W_{m_1,m_2}$ has a degenerate limiting distribution, $W_{m_1,m_2} \claw\: 0$.

\item $m_{2} \sim \rho \, m_{1}$, with $0 < \rho < 1$: $W_{m_1,m_2}$ converges in distribution to a (shifted) geometrically distributed r.v.\ $X_{\rho}$ with success probability $\frac{1-\rho}{1+\rho}$:
\begin{equation*}
  W_{m_1,m_2} \claw X_{\rho}, \quad \text{with $X_{\rho} \law$ $\Geom\Big(\frac{1-\rho}{1+\rho}\Big)-1$},
\end{equation*}
thus with probability mass function
\begin{equation*}
  p_{k}^{[X]} := \P\{X_{\rho} = k\} = \frac{1-\rho}{1+\rho} \cdot \Big(\frac{2 \rho}{1+\rho}\Big)^{k}, \quad k \ge 0.
\end{equation*}

\item $m_{2} \sim m_{1}$, where the difference $d := m_{1}-m_{2}$ satisfies $\sqrt{m_{1}} \ll d \ll m_{1}$: suitably scaled, $W_{m_{1},m_{2}}$ is asymptotically exponential distributed with parameter $\frac{1}{2}$:
\begin{gather*}
  \frac{d \cdot W_{m_1,m_2}}{m_{1}} \claw X, \quad X \law \Exp\big(\textstyle{\frac{1}{2}}\big),\\
	\text{i.e., $X$ has the distribution function $F(x) = 1-e^{-\frac{x}{2}}$, $x > 0$}.
\end{gather*}

\item $m_{2} \sim m_{1}$, where the difference $d := m_{1}-m_{2}$ satisfies $d \sim \alpha \sqrt{m_{1}}$, with $\alpha > 0$: suitably scaled, $W_{m_{1},m_{2}}$ converges in distribution to a r.v.\ $X_{\alpha}$, which is characterized via the distribution function
\begin{equation*}
  F(x) = 1-e^{-\frac{x(2\alpha+x)}{4}}, \quad x > 0.
\end{equation*}

Thus, $W_{m_{1},m_{2}}$ is asymptotically linear exponential distributed:
\begin{equation*}
  \frac{W_{m_1,m_2}}{\sqrt{m_{1}}} \claw X_{\alpha}, \quad X_{\alpha} \law \LinExp\big(\textstyle{\frac{\alpha}{2}},\textstyle{\frac{1}{2}}\big).
\end{equation*}

\item $m_{2} \sim m_{1}$, where the difference $d := m_{1}-m_{2}$ satisfies $d = o(\sqrt{m_{1}})$: suitably scaled, $W_{m_{1},m_{2}}$ converges in distribution to a Rayleigh distributed r.v.\ $X$ with parameter $\sqrt{2}$:
\begin{gather*}
  \frac{W_{m_1,m_2}}{\sqrt{m_{1}}} \claw X, \quad X \law \Rayleigh\big(\sqrt{2}\big),\\
	\text{thus $X$ has the distribution function $F(x) = 1-e^{-\frac{x^{2}}{4}}$, $x > 0$}.
\end{gather*}
\end{itemize}
\end{theorem}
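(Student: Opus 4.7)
The plan is to reduce all five cases to asymptotic analysis of a single tail formula. The key observation is that, writing $a_k := \frac{2^k \binom{m_1+m_2-k}{m_1}}{\binom{m_1+m_2}{m_1}}$, the exact pmf from Corollary~\ref{cor:Marginal_dist} reads $\P\{W_{m_1,m_2}=k\} = a_k - a_{k+1}$, so the sum telescopes and yields
\[
\P\{W_{m_1,m_2} \ge k\} = a_k = \prod_{j=0}^{k-1}\frac{2(m_2-j)}{m_1+m_2-j} = \prod_{j=0}^{k-1}\bigg(1 - \frac{d+j}{n-j}\bigg),
\]
with $d := m_1-m_2$ and $n := m_1+m_2$. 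Since all continuous limits in the theorem are supported on $(0,\infty)$ with no atom at $0$, and since $\P\{W_{m_1,m_2}=0\} = d/n \to 0$ in Cases~3--5, convergence at continuity points of the survival function suffices; for the integer-valued limits in Cases~1--2 pointwise convergence of the tails on $\mathbb{N}_0$ is sufficient.

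The first two regimes fall out almost immediately. In Case~1, $m_2 = o(m_1)$ gives $\P\{W_{m_1,m_2} \ge 1\} = 2m_2/n \to 0$, forcing the degenerate limit. In Case~2, $m_2 \sim \rho m_1$ with $0<\rho<1$, each finite product $a_k$ converges to $\big(2\rho/(1+\rho)\big)^k$ for every fixed $k$, which is exactly the survival function of $\Geom\big((1-\rho)/(1+\rho)\big)-1$.

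For the three scaling regimes, I pass to logarithms and apply $\log(1-y) = -y + O(y^2)$, valid uniformly since $(d+j)/(n-j)$ is easily seen to be bounded away from $1$ over the range of interest. This gives
\[
\log \P\{W_{m_1,m_2} \ge k\} = -\sum_{j=0}^{k-1}\frac{d+j}{n-j} + O\!\left(\sum_{j=0}^{k-1}\bigg(\frac{d+j}{n-j}\bigg)^{\!2}\right).
\]
Under $k = o(n)$ the leading sum equals $\big(kd + k^2/2\big)/n \cdot (1+o(1))$. Substituting $k = \lfloor xm_1/d\rfloor$ in Case~3 and $k = \lfloor x\sqrt{m_1}\rfloor$ in Cases~4 and~5, together with $n \sim 2m_1$, yields leading exponents $-x/2 - x^2 m_1/(4d^2)$, $-x(2\alpha+x)/4$, and $-x^2/4$, respectively. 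This already gives the limits in Cases~4 and~5 and identifies the critical scale in Case~3: the parasitic term $x^2 m_1/(4d^2)$ vanishes precisely when $d^2 \gg m_1$, i.e.\ when $d \gg \sqrt{m_1}$, which is exactly the hypothesis and explains the phase transition at $d \asymp \sqrt{m_1}$.

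The only non-mechanical step is checking that the second-order remainder is negligible. Bounding it by $k$ times the square of the maximal summand gives $O\big(d/m_1\big)$ in Case~3 (where $k \ll d$, since $d^2 \gg m_1$ implies $k/d = xm_1/d^2 \to 0$) and $O(m_1^{-1/2})$ in Cases~4 and~5, all $o(1)$. I do not anticipate any other serious obstacle: everything afterwards is direct algebraic verification of the stated distribution functions.
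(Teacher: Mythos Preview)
Your argument is correct and follows essentially the same route as the paper: start from the exact distribution of $W_{m_1,m_2}$ in Corollary~\ref{cor:Marginal_dist}, reduce to the tail quantity $a_k = 2^k\binom{m_1+m_2-k}{m_1}/\binom{m_1+m_2}{m_1}$, and extract its asymptotics case by case. The paper states the proof via Stirling's formula applied to the factorials (deferring the details to the analogous joint computation in Section~\ref{sec:Joint_limit_laws}, where the key estimate $a_k = e^{-k(2d+k)/(4m_1)}(1+O(kd^2/m_1^2)+O(k^3/m_1^2))$ appears), whereas you obtain the same leading exponent $(kd+k^2/2)/n \sim k(2d+k)/(4m_1)$ by writing $a_k$ as a finite product and Taylor-expanding $\log(1-y)$. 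Your telescoping observation $\P\{W\ge k\}=a_k$ is a clean entry point and your product form avoids invoking Stirling explicitly, but the two computations are equivalent in content and error control; there is no genuinely different idea here.
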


\begin{remark}[Properties of linear exponential distributions]
The so-called linear exponential distribution $\LinExp(\lambda,\nu)$, see \cite[p.~480]{JohnsonKotzBala1994}, is a two-parametric family of continuous distributions, with support $\mathbb{R}^{+}$, characterized via the distribution function
\begin{equation*}
  F(x) = 1-e^{-\big(\lambda x + \frac{\nu x^{2}}{2}\big)}, \quad x > 0.
\end{equation*}
It generalizes the exponential distribution as well as the Rayleigh distribution.
Alternatively, $\LinExp(\lambda,\nu)$ can be represented in term of an Exponential distribution $Z \law \text{Exp}(1)$:
\[
Y\law -\frac{\lambda}{\mu} + \frac{\sqrt{\lambda^2+2\mu Z}}{\mu}.
\]
\end{remark}

\begin{remark}
The range $m_{2} \sim m_{1}$ and $d := m_{1}-m_{2}\sim \alpha \sqrt{m_1}$ can be relaxed to
$d := m_{1}-m_{2}\sim \alpha_{m_1} \sqrt{m_1}$, allowing also sequences $\alpha_{m_1}\to 0$, 
as the linear exponential distribution degenerates to a Rayleigh law.
\end{remark}

Our results in Lemmata~\ref{lem:P-W-Relation} and~\ref{lem:L-T-C-Relation} combined with Theorem~\ref{thm:W_limit} 
allows one to directly give alternative proofs for the distribution as well as limit laws for the total number of correct guesses $C_{m_1,m_2}$. 
As mentioned before, these limit laws were stated in~\cite{KuPanPro2009}, obtained from the explicit distribution of $C_{m_1,m_2}$ given in~\cite{Sulanke,KnoPro2001},
\[
\P\{C_{m_1,m_2}=k\}=\frac{\binom{m_1+m_2}{k}-\binom{m_1+m_2}{k+1}}{\binom{m_1+m_2}{m_1}},\quad m_1\le k \le m_1+m_2.
\]
but without proofs. Here, we only additionally note in passing a very simple relation between the moments of $C_{m_1,m_2}$ and $W_{m_1,m_2}$.

\begin{coroll}[Moments of the total number of correct guesses]
\label{coro}
Let $\hat{C}_{m_1,m_2}$ denote the shifted number of correct guesses $C_{m_1,m_2}-m_1$.
The $s$-th factorial moments $\E(\fallfak{\hat{C}_{m_1,m_2}}{s})$ of $\hat{C}_{m_1,m_2}$ satisfy
\[
\E(\fallfak{\hat{C}_{m_1,m_2}}{s})=\frac1{2^s}\E(\fallfak{W_{m_1,m_2}}{s}),\quad s\ge 1.
\]
\end{coroll}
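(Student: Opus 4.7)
The plan is to reduce everything to Lemmata~\ref{lem:P-W-Relation} and~\ref{lem:L-T-C-Relation} together with the classical factorial-moment formula for a Binomial distribution. By Lemma~\ref{lem:L-T-C-Relation}, $\hat{C}_{m_1,m_2} = C_{m_1,m_2} - m_{1} \law P_{m_1,m_2}$, and by Lemma~\ref{lem:P-W-Relation}, $P_{m_1,m_2} \law B(W_{m_1,m_2},\tfrac{1}{2})$. Thus the task reduces to showing
\[
  \E\big(\fallfak{B(N,\tfrac{1}{2})}{s}\big) = \frac{1}{2^{s}} \, \E\big(\fallfak{N}{s}\big)
\]
for any $\N_{0}$-valued random variable $N$ (here $N = W_{m_1,m_2}$).

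First I would recall that for a fixed Binomial $B(n,p)$ the probability generating function equals $(1-p+p u)^{n}$, so taking the $s$-th derivative at $u=1$ yields
\[
  \E\big(\fallfak{B(n,p)}{s}\big) = \fallfak{n}{s} \, p^{s}.
\]
Specializing to $p = \tfrac{1}{2}$ gives $\E\big(\fallfak{B(n,\frac{1}{2})}{s}\big) = \fallfak{n}{s}/2^{s}$.

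Then I would simply condition on $W_{m_1,m_2}$: since, conditionally on $W_{m_1,m_2}=n$, the variable $P_{m_1,m_2}$ is $B(n,\tfrac{1}{2})$, the tower property gives
\[
  \E\big(\fallfak{\hat{C}_{m_1,m_2}}{s}\big) = \E\big(\fallfak{P_{m_1,m_2}}{s}\big) = \E\Big(\E\big(\fallfak{B(W_{m_1,m_2},\tfrac{1}{2})}{s} \,\big|\, W_{m_1,m_2}\big)\Big) = \frac{1}{2^{s}} \, \E\big(\fallfak{W_{m_1,m_2}}{s}\big),
\]
which is the claim.

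There is no real obstacle: once the two lemmas are invoked, the assertion collapses to the elementary fact that scaling the trial-success probability of a Binomial by $\tfrac{1}{2}$ scales its $s$-th factorial moment by $2^{-s}$, and the conditioning step is routine.
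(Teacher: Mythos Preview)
Your argument is correct and is essentially identical to the paper's own proof: both invoke Lemmata~\ref{lem:P-W-Relation} and~\ref{lem:L-T-C-Relation} to write $\hat{C}_{m_1,m_2}=P_{m_1,m_2}\law B(W_{m_1,m_2},\tfrac12)$, use the standard formula $\E(\fallfak{B(n,p)}{s})=p^{s}\fallfak{n}{s}$ conditionally, and finish with the tower rule. The only (harmless) slack is that you write $\hat{C}_{m_1,m_2}\law P_{m_1,m_2}$ where in fact equality holds pointwise.
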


\begin{proof}[Proof of Theorem~\ref{thm:T_limit}-\ref{thm:W_limit}]
The stated limiting distribution results are obtained from the explicit formul{\ae} given in Corollary~\ref{cor:Marginal_dist} via a case-by-case study according to the growth behaviour of $m_2$ and a careful application of Stirling's formula for the factorials, 
\begin{equation}\label{eqn:Stirling_formula}
  \ln n! = \big(n+\textstyle{\frac{1}{2}}\big) \ln n - n + \textstyle{\frac{1}{2}} \ln(2 \pi) + \mathcal{O}\big(n^{-1}\big).
\end{equation}
Since these computations are analogous to the ones for the joint limit laws given in Section~\ref{sec:Joint_limit_laws} we omit them here. 
\end{proof}

\begin{proof}[Proof of Corollary~\ref{coro}]
By well-known properties of the binomial distribution (or via direct computation) we obtain for integers $s\ge 1$:
\[
\E(\fallfak{\hat{C}_{m_1,m_2}}{s}\mid W_{m_1,m_2}) = \frac{1}{2^s}\fallfak{W_{m_1,m_2}}{s}.
\]
By the tower rule of total expectation,
\[
\E(\fallfak{\hat{C}_{m_1,m_2}}{s})=\E\Big(\E(\fallfak{\hat{C}_{m_1,m_2}}{s}\mid W_{m_1,m_2})\Big),
\]
the stated result follows.
\end{proof}

\subsection{Joint limit laws}\label{sec:Joint_limit_laws}

From the explicit formul{\ae} for the joint distribution of $W_{m_1,m_2}$ and $T_{m_1,m_2}$ given in Theorem~\ref{thm:WT_exact_joint}, we deduce the following joint limit laws.
\begin{theorem}\label{thm:WT-JointLimit}
The joint limiting distribution behaviour of the random variables $W_{m_1,m_2}$ and $T_{m_1,m_2}$ is, for $0 \le m_{2} \le m_{1}$ and $m_{1} \to \infty$, depending on the growth behaviour of $m_{2}$ given as follows.
\begin{itemize}
\item $m_{2} \sim \rho m_{1}$, with $0 < \rho < 1$: $(W_{m_1,m_2}, T_{m_1,m_2})$ converge in distribution to a pair of discrete r.v.\ $(\hat{X}_{\rho}, \hat{Y}_{\rho})$, whose joint probability mass function is, for $k \ge 0$ and $\ell \ge 1$, given as follows:
\begin{equation*}
  \hat{p}_{k,\ell} := \P\{\hat{X}_{\rho} = k \wedge \hat{Y}_{\rho} = \ell\} = 
	\begin{cases}
	  \frac{\rho-\rho^{\ell}}{(1+\rho)^{\ell+1}}, & \quad \text{for $k=0$ and $\ell \ge 1$},\\
		\frac{1-\rho}{\rho(1+\rho)} \cdot \left(\frac{\rho}{1+\rho}\right)^{\ell} \cdot \left(\frac{2\rho}{1+\rho}\right)^{k}, & \quad \text{for $k, \ell \ge 1$}.
	\end{cases}
\end{equation*}
\end{itemize}

For all other growth ranges of $m_{2}$, $W_{m_1,m_2}$ and $T_{m_1,m_2}$ are asymptotically independent, i.e., suitably scaled, this pair of r.v.\ weakly converges to a pair $(X,Y)$ of independent r.v., distributed as the limit laws of the corresponding marginal distributions given in Theorem~\ref{thm:T_limit}-\ref{thm:W_limit}, thus: 
\begin{itemize}
\item $m_{2} \ge 1$ fixed:
\begin{equation*}
  \Big(W_{m_1,m_2}, \frac{T_{m_1,m_2}}{m_1}\Big) \claw\: (X,Y) \law \big(0,\BetaDist(1,m_2)\big) \law \big(0,\min\{{\Unif}[0,1]_{i} : 1 \le i \le m_{2}\}\big).
\end{equation*}
\item $m_{2} = o(m_{1})$, but $m_{2} \to \infty$:
\begin{equation*}
  \Big(W_{m_1,m_2}, \frac{m_{2} \cdot T_{m_1,m_2}}{m_1}\Big) \claw\: (X,Y) \law \big(0,\Exp(1)\big).
\end{equation*}
\item $m_{2} \sim m_{1}$, where the difference $d := m_{1}-m_{2}$ satisfies $\sqrt{m_{1}} \ll d \ll m_{1}$:
\begin{equation*}
  \Big(\frac{d \cdot W_{m_1,m_2}}{m_1}, T_{m_1,m_2}\Big) \claw\: (X,Y) \law \Big(\Exp\big(\textstyle{\frac{1}{2}}\big), \Geom\big(\textstyle{\frac{1}{2}}\big)\Big).
\end{equation*}
\item $m_{2} \sim m_{1}$, where the difference $d := m_{1}-m_{2}$ satisfies $d \sim \alpha \sqrt{m_{1}}$, with $\alpha > 0$:
\begin{equation*}
  \Big(\frac{W_{m_1,m_2}}{\sqrt{m_1}},T_{m_1,m_2}\Big) \claw\: (X,Y) \law \Big(\LinExp\big(\textstyle{\frac{\alpha}{2}},\textstyle{\frac{1}{2}}\big), \Geom\big(\textstyle{\frac{1}{2}}\big)\Big).
\end{equation*}
\item $m_{2} \sim m_{1}$, where the difference $d := m_{1}-m_{2}$ satisfies $d = o(\sqrt{m_{1}})$:
\begin{equation*}
  \Big(\frac{W_{m_1,m_2}}{\sqrt{m_1}},T_{m_1,m_2}\Big) \claw\: (X,Y) \law \Big(\Rayleigh\big(\sqrt{2}\big), \Geom\big(\textstyle{\frac{1}{2}}\big)\Big).
\end{equation*}
\end{itemize}
\end{theorem}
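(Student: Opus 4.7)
The plan is to work directly from the explicit joint probability mass function in Theorem~\ref{thm:WT_exact_joint} and, for each growth regime of $m_2$, determine the asymptotics of the binomial ratios through Stirling's formula~\eqref{eqn:Stirling_formula}. Setting $M := m_1+m_2$ and
\[
A(s) := \frac{\binom{M-s}{m_1-1}}{\binom{M}{m_1}} = \frac{m_1 \cdot \fallfak{m_2}{s-1}}{\fallfak{M}{s}}, \qquad s \ge 1,
\]
Theorem~\ref{thm:WT_exact_joint} gives, for $k,\ell\ge 1$ away from the diagonal point $(m_1,m_2)=(\ell,k)$,
\[
\P\{W_{m_1,m_2}=k,\, T_{m_1,m_2}=\ell\} = 2^k A(\ell+k) - 2^{k+1}A(\ell+k+1),
\]
and the elementary identity $A(s+1)/A(s) = (m_2-s+1)/(M-s)$ drives the subsequent asymptotic analysis. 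The $k=0$ boundary case is treated in parallel from the corresponding clause of Theorem~\ref{thm:WT_exact_joint}.

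For $m_2 \sim \rho m_1$ with $0<\rho<1$, the variables $k$ and $\ell$ remain bounded with positive limiting probability, and an elementary computation yields
\[
A(s) \longrightarrow \frac{1}{1+\rho}\Big(\frac{\rho}{1+\rho}\Big)^{s-1}.
\]
Substituting into the two-term difference yields exactly the stated joint probability mass function $\hat p_{k,\ell}$, and the $k=0$ subcase is handled by the analogous limit $\binom{M-\ell-1}{m_2-1}/\binom{M}{m_1} \to \rho/(1+\rho)^{\ell+1}$.

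In all other regimes the task is to prove asymptotic independence by factoring the joint probability into the product of the marginals from Theorem~\ref{thm:T_limit}--\ref{thm:W_limit}. When $m_2$ is fixed or $m_2=o(m_1)$, $W_{m_1,m_2}\claw 0$, so joint convergence reduces to marginal convergence of the rescaled $T_{m_1,m_2}$. In the delicate ranges $m_2\sim m_1$ with $d=m_1-m_2$, I would fix $\ell$ and rescale $k$ to the regime-appropriate scale ($k\sim x\sqrt{m_1}$ when $d=O(\sqrt{m_1})$, $k\sim x m_1/d$ when $\sqrt{m_1}\ll d\ll m_1$), and expand
\[
2^{k}A(\ell+k) = 2^{-\ell}\exp\!\Big(-\tfrac{d(\ell+k-1)}{2m_1} - \tfrac{(\ell+k-1)^{2}}{4m_1}\Big)(1+o(1)),
\]
together with $1 - 2A(\ell+k+1)/A(\ell+k) = (d+\ell+k+O(1))/(2m_1)$. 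After multiplying the joint probability by the proper scaling factor ($\sqrt{m_1}$, resp.\ $m_1/d$), one obtains in the limit $2^{-\ell}$ times the density of the announced linear-exponential, Rayleigh, or exponential law; this factorisation gives the asymptotic independence with the $\Geom(\tfrac{1}{2})$-limit of $T_{m_1,m_2}$ carried by the factor $2^{-\ell}$.

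The main obstacle will be maintaining uniform control on the Stirling remainders when $s=\ell+k$ grows on the intermediate scales $\sqrt{m_1}$ or $m_1/d$: error terms of type $s(d/m_1)^{2}$ and $s^{3}/m_1^{2}$ must vanish in every regime, and the double cancellation at leading order requires the $o(1)$ term in the exponential expansion to be genuinely uniform so that the subleading factor $1-2A(s+1)/A(s)$ really reveals the correct continuous density. Once these bounds are secured, pointwise convergence of the joint PMF on the relevant lattice, together with tightness inherited from the marginals of Theorem~\ref{thm:T_limit}--\ref{thm:W_limit}, delivers the claimed joint limit laws.
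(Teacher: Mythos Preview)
Your plan is essentially the paper's own: start from the exact joint probability mass function of Theorem~\ref{thm:WT_exact_joint}, apply Stirling's formula~\eqref{eqn:Stirling_formula}, and treat each growth regime of $m_2$ separately; your computation of the central case $m_2\sim\rho m_1$ matches the paper verbatim. The one technical difference is that for the three $m_2\sim m_1$ regimes the paper first sums the PMF to the one-sided cumulative distribution $\P\{W_{m_1,m_2}\le k,\,T_{m_1,m_2}=\ell\}$ (your two-term difference telescopes), which eliminates exactly the ``double cancellation at leading order'' you flag as the main obstacle and makes the Stirling expansion routine---you may find it cleaner to do the same rather than push through the local-limit argument at PMF level.
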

\begin{proof}
  The joint limiting behaviour of $W_{m_1,m_2}$ and $T_{m_1,m_2}$, for $m_1 \to \infty$, follows from the explicit formul{\ae} for the joint probability mass function given in Theorem~\ref{thm:WT_exact_joint} by distinguishing between various cases according to the growth behaviour of $m_2$, using suitable estimates, and applying Stirling's formula~\eqref{eqn:Stirling_formula}. For some regions of $m_2$ we find it slightly more convenient to start with explicit formul{\ae} for the joint or one-sided cumulative distribution functions, which are obtained easily from Theorem~\ref{thm:WT_exact_joint} by summation, thus, we just state them. Namely, for $1 \le \ell \le m_1$, $0 \le k \le m_2$ and $(m_1,m_2) \neq (\ell,k)$, one gets
	\begin{multline}\label{eqn:W-T-CDF}
	  \P\{W_{m_1,m_2} \le k \wedge T_{m_1,m_2} \le \ell\}\\
		=	1-\frac{\binom{m_1+m_2-\ell-1}{m_2} + \binom{m_1+m_2-\ell-1}{m_1} + 2^{k+1} \binom{m_1+m_2-k-1}{m_1} - 2^{k+1} \binom{m_1+m_2-k-\ell-1}{m_1}}{\binom{m_1+m_2}{m_1}},
	\end{multline}
	as well as (we omit stating the cases $(m_1,m_2)=(\ell,k)$ not relevant here)
	\begin{multline}\label{eqn:W-T-oneCDF}
	  \P\{W_{m_1,m_2} \le k \wedge T_{m_1,m_2}=\ell\}\\
		=	\frac{\binom{m_1+m_2-\ell-1}{m_2-1}+\binom{m_1+m_2-\ell-1}{m_1-1} - 2^{k+1} \binom{m_1+m_2-k-\ell-1}{m_1-1}}{\binom{m_1+m_2}{m_1}}.
	\end{multline}
  
	Next we sketch the asymptotic considerations for the different cases.
	\begin{itemize}
	\item $m_2 \ge 1$ fixed: setting $k=0$ in \eqref{eqn:W-T-CDF} yields
	\begin{equation}\label{eqn:W-T-CDF-k0}
	  \P\{W_{m_1,m_2} =0 \wedge T_{m_1,m_2} \le \ell\} = 1- \frac{\binom{m_1+m_2-\ell-1}{m_2}}{\binom{m_1+m_2}{m_2}}
		- \frac{2 \binom{m_1+m_2-1}{m_1}}{\binom{m_1+m_2}{m_1}} + \frac{\binom{m_1+m_2-\ell-1}{m_1}}{\binom{m_1+m_2}{m_1}}.
	\end{equation}
	Since
	\begin{align*}
	  \frac{\binom{m_1+m_2-1}{m_1}}{\binom{m_1+m_2}{m_1}} & = \frac{m_2}{m_1+m_2} = \mathcal{O}\big(\textstyle{\frac{m_2}{m_1}}\big),\\
		\frac{\binom{m_1+m_2-\ell-1}{m_1}}{\binom{m_1+m_2}{m_1}} & = \frac{m_{2}^{\underline{\ell+1}}}{(m_1+m_2)^{\underline{\ell+1}}} = 0, \quad \text{for $\ell \ge m_2$},
	\end{align*}
	the asymptotic contribution relevant for $m_2$ fixed is stemming from
	\begin{align*}
	  \frac{\binom{m_1+m_2-\ell-1}{m_2}}{\binom{m_1+m_2}{m_2}} & = \frac{m_{1}^{\underline{\ell+1}}}{(m_1+m_2)^{\underline{\ell+1}}} = \frac{(m_1+m_2-\ell-1)^{\underline{m_2}}}{(m_1+m_2)^{\underline{m_2}}}\\
		& = \frac{(m_1-\ell)^{m_2} \cdot \big(1+\mathcal{O}\big(\frac{1}{m_1-\ell}\big)\big)}{m_1^{m_2} \cdot \big(1+\mathcal{O}\big(\frac{1}{m_1}\big)\big)} = \Big(1-\frac{\ell}{m_1}\Big)^{m_2} \cdot \Big(1+\mathcal{O}\big(\textstyle{\frac{1}{m_1-\ell}}\big)\Big).
	\end{align*}
	Thus, for $m_2 \ge 1$ fixed, $m_1 \to \infty$, and, e.g., $\sqrt{m_1} \le \ell \le m_1-\sqrt{m_1}$, it holds
	\begin{equation*}
	  \P\{W_{m_1,m_2} =0 \wedge T_{m_1,m_2} \le \ell\} \to 1- \Big(1-\frac{\ell}{m_1}\Big)^{m_2},
	\end{equation*}
	i.e., by setting $y=\frac{\ell}{m_1}$,
	\begin{equation*}
	  \P\Big\{W_{m_1,m_2}=0 \wedge \frac{T_{m_1,m_2}}{m_1} \le y\Big\} \to F(y) = 1-(1-y)^{m_2}, \quad \text{for $0 < y < 1$}.
	\end{equation*}
	$F(y)$ is the distribution function of a Beta distribution or, alternatively, the distribution function of the minimum of $m_2$ independent uniformly on $[0,1]$ distributed r.v.
	\item $m_2 =o(m_1)$, but $m_2 \to \infty$: again we consider \eqref{eqn:W-T-CDF-k0}, where, as a refinement to the considerations of the previous case, we get by assuming $\frac{m_1}{m_2} \ge 2$ due to simple estimates
	\begin{equation*}
	\frac{\binom{m_1+m_2-\ell-1}{m_1}}{\binom{m_1+m_2}{m_1}} \le \Big(\frac{m_2}{m_1+m_2}\Big)^{\ell+1}
	= e^{-(\ell+1)\ln\big(1+\frac{m_2}{m_1}\big)} \le e^{-\ell}, \quad \text{for $\ell \ge 1$},
	\end{equation*}
	thus asymptotically negligible contributions for $\ell \to \infty$. Again, the relevant contribution for the asymptotic behaviour is coming from
	\begin{align*}
	  \frac{\binom{m_1+m_2-\ell-1}{m_2}}{\binom{m_1+m_2}{m_2}} & = \frac{m_1! \cdot (m_1+m_2-\ell-1)!}{(m_1-\ell-1)! \cdot (m_1+m_2)!}\\
		& = e^{-\frac{\ell m_2}{m_1}} \cdot \Big(1+\mathcal{O}\big(\textstyle{\frac{\ell m_2^{2}}{m_1^{2}}}\big)+\mathcal{O}\big(\textstyle{\frac{\ell ^{2} m_2}{m_1^{2}}}\big)+\mathcal{O}\big(\textstyle{\frac{\ell}{m_1}}\big)+\mathcal{O}\big(\textstyle{\frac{m_2}{m_1}}\big) \Big),
	\end{align*}
	where the asymptotic evaluation is obtained by applying Stirling's formula.
	Thus, for $m_1, m_2 \to \infty$, $m_2 = o(m_1)$, and say $\sqrt{\frac{m_1}{m_2}} \le \ell \le \min\Big\{\big(\frac{m_1}{m_2}\big)^{\frac{3}{2}},\frac{m_1}{m_2^{\frac{3}{4}}}\Big\}$, it holds
	\begin{equation*}
	  \P\{W_{m_1,m_2} =0 \wedge T_{m_1,m_2} \le \ell\} \to 1 - e^{-\frac{\ell m_2}{m_1}},
	\end{equation*}
	i.e., by setting $y=\frac{\ell m_2}{m_1}$,
	\begin{equation*}
	  \P\Big\{W_{m_1,m_2}=0 \wedge \frac{m_2 \cdot T_{m_1,m_2}}{m_1} \le y\Big\} \to 1-e^{-y}, \quad \text{for $y > 0$}.
	\end{equation*}
	\item $m_{2} \sim \rho m_{1}$, with $0 < \rho < 1$: in this range one considers the asymptotic behaviour of the probability mass function given in Theorem~\ref{thm:WT_exact_joint}, for fixed $k \ge 0$, $\ell \ge 1$. The case $k=0$ has to be treaten separately, where we obtain
	\begin{align*}
	  \frac{\binom{m_1+m_2-\ell-1}{m_2-1}}{\binom{m_1+m_2}{m_2}} & = \frac{m_2 m_1^{\underline{\ell}}}{(m_1+m_2)^{\underline{\ell+1}}} = \frac{\frac{m_2}{m_1}}{\big(1+\frac{m_2}{m_1}\big)^{\ell+1}} \cdot \Big(1+\mathcal{O}\big(\textstyle{\frac{1}{m_1}}\big)\Big),\\
		\frac{\binom{m_1+m_2-\ell-1}{m_1-1}}{\binom{m_1+m_2}{m_1}} & = \frac{m_1 m_2^{\underline{\ell}}}{(m_1+m_2)^{\underline{\ell+1}}} = \frac{(\frac{m_2}{m_1})^{\ell}}{\big(1+\frac{m_2}{m_1}\big)^{\ell+1}} \cdot \Big(1+\mathcal{O}\big(\textstyle{\frac{1}{m_2}}\big)\Big).
	\end{align*}
	Thus, for $m_1 \to \infty$, $m_2 \sim \rho m_1$, with $0 < \rho < 1$, and $\ell \ge 1$ fixed, it holds
	\begin{equation*}
	  \P\{W_{m_1,m_2} = 0 \wedge T_{m_1,m_2} = \ell\} \to \frac{\rho - \rho^{\ell}}{(1+\rho)^{\ell+1}}.
	\end{equation*}
	Similarly, for $k \ge 1$ one gets
	\begin{align*}
	  \frac{2^{k} \binom{m_1+m_2-\ell-k}{m_1-1}}{\binom{m_1+m_2}{m_1}} & = \frac{2^{k} m_1 m_2^{\underline{\ell+k-1}}}{(m_1+m_2)^{\underline{\ell+k}}}
		= \frac{2^{k} (\frac{m_2}{m_1})^{\ell+k-1}}{\big(1+\frac{m_2}{m_1}\big)^{\ell+k}} \cdot \Big(1+\mathcal{O}\big(\textstyle{\frac{1}{m_2}}\big)\Big),\\
		\frac{2^{k+1} \binom{m_1+m_2-\ell-k-1}{m_1-1}}{\binom{m_1+m_2}{m_1}} & = \frac{2^{k+1} m_1 m_2^{\underline{\ell+k}}}{(m_1+m_2)^{\underline{\ell+k+1}}}
		= \frac{2^{k+1} (\frac{m_2}{m_1})^{\ell+k}}{\big(1+\frac{m_2}{m_1}\big)^{\ell+k+1}} \cdot \Big(1+\mathcal{O}\big(\textstyle{\frac{1}{m_2}}\big)\Big),
	\end{align*}
	which implies, for $m_1 \to \infty$, $m_2 \sim \rho m_1$, with $0 < \rho < 1$, and $k, \ell \ge 1$ fixed,
	\begin{multline*}
	  \P\{W_{m_1,m_2} = k \wedge T_{m_1,m_2} = \ell\} \to \frac{2^{k} \rho^{\ell+k-1}}{(1+\rho)^{\ell+k}} - \frac{2^{k+1} \rho^{\ell+k}}{(1+\rho)^{\ell+k+1}}\\
		= \frac{1-\rho}{\rho (1+\rho)} \cdot \Big(\frac{\rho}{1+\rho}\Big)^{\ell} \cdot \Big(\frac{2 \rho}{1+\rho}\Big)^{k}.
	\end{multline*}
	\item $m_{2} \sim m_{1}$, where the difference $d := m_{1}-m_{2}$ satisfies $\sqrt{m_{1}} \ll d \ll m_{1}$: we start with \eqref{eqn:W-T-oneCDF} and examine its asymptotic behaviour for $\ell \ge 1$ fixed, where at first we only assume $d=o(m_1)$. Analogous to previous considerations we get
	\begin{align*}
	  \frac{\binom{m_1+m_2-\ell-1}{m_2-1}}{\binom{m_1+m_2}{m_2}} & = \frac{\frac{m_2}{m_1}}{\big(1+\frac{m_2}{m_1}\big)^{\ell+1}} \cdot \Big(1+\mathcal{O}\big(\textstyle{\frac{1}{m_1}}\big)\Big) = \frac{1}{2^{\ell+1}} \cdot \Big(1+\mathcal{O}\big(\textstyle{\frac{d}{m_1}}\big)\Big),\\
		\frac{\binom{m_1+m_2-\ell-1}{m_1-1}}{\binom{m_1+m_2}{m_1}} & = \frac{(\frac{m_2}{m_1})^{\ell}}{\big(1+\frac{m_2}{m_1}\big)^{\ell+1}} \cdot \Big(1+\mathcal{O}\big(\textstyle{\frac{1}{m_1}}\big)\Big) = \frac{1}{2^{\ell+1}} \cdot \Big(1+\mathcal{O}\big(\textstyle{\frac{d}{m_1}}\big)\Big),
	\end{align*}
	and
	\begin{equation*}
	  \frac{2^{k+1} \binom{m_1+m_2-k-\ell-1}{m_1-1}}{\binom{m_1+m_2}{m_1}} = \frac{2m_1 (m_2-k)^{\underline{\ell}}}{(m_1+m_2-k)^{\underline{\ell+1}}} \cdot
		\frac{2^{k} \binom{m_1+m_2-k}{m_1}}{\binom{m_1+m_2}{m_1}}.
	\end{equation*}
	It easily follows that, for $\ell \ge 1$ fixed,
	\begin{equation*}
	  \frac{2m_1 (m_2-k)^{\underline{\ell}}}{(m_1+m_2-k)^{\underline{\ell+1}}} = \frac{2 m_1 m_2^{\ell}}{(m_1+m_2)^{\ell+1}} \cdot \Big(1+\mathcal{O}\big(\textstyle{\frac{k}{m_2}}\big)\Big) = \frac{1}{2^{\ell}} \cdot \Big(1+\mathcal{O}\big(\textstyle{\frac{d}{m_1}}\big)+\mathcal{O}\big(\textstyle{\frac{k}{m_1}}\big)\Big),
	\end{equation*}
	whereas, by an application of Stirling's formula, one obtains
	\begin{equation*}
	  \frac{2^{k} \binom{m_1+m_2-k}{m_1}}{\binom{m_1+m_2}{m_1}} = e^{- \frac{k(2d+k)}{4m_1}} \cdot \Big(1+\mathcal{O}\big(\textstyle{\frac{k d^{2}}{m_1^{2}}}\big)+\mathcal{O}\big(\textstyle{\frac{k^{3}}{m_1^{2}}}\big)\Big).
	\end{equation*}
	Thus, putting things together, we obtain, for $\ell \ge 1$ fixed, the following asymptotic result, valid uniformly for say $k \le \min\big\{m_1^{\frac{2}{3}-\epsilon},(\frac{m_1}{d})^{2-\epsilon}\big\}$, for a fixed $\epsilon > 0$,
	\begin{align}
	  & \P\{W_{m_1,m_2} \le k \wedge T_{m_1,m_2} = \ell\}\notag\\
		& \quad = 2^{-\ell} \cdot \left(1-e^{-\frac{k (2d+k)}{4m_1}}\right) \cdot \Big(1+\mathcal{O}\big(\textstyle{\frac{d}{m_1}}\big)+\mathcal{O}\big(\textstyle{\frac{k}{m_1}}\big)+\mathcal{O}\big(\textstyle{\frac{kd^{2}}{m_1^{2}}}\big)+\mathcal{O}\big(\textstyle{\frac{k^{3}}{m_1^{2}}}\big)\Big).\label{eqn:WT-asympt}
	\end{align}
	
	Now we restrict our considerations to the range $\sqrt{m_1} \ll d \ll m_1$. For $m_1 \to \infty$, $\ell \ge 1$ fixed, and say $k \le \min\Big\{\big(\frac{m_1}{d}\big)^{\frac{3}{2}},\frac{m_1^{\frac{3}{4}}}{\sqrt{d}}\Big\}$, which in addition implies that $k=o(d)$, it holds
	\begin{equation*}
	  \P\{W_{m_1,m_2} \le k \wedge T_{m_1,m_2} = \ell\} \to 2^{-\ell} \cdot \Big(1-e^{-\frac{k d}{2 m_1}}\Big),
	\end{equation*}
	i.e., by setting $x=\frac{k d}{m_1}$,
	\begin{equation*}
	  \P\Big\{\frac{d \cdot W_{m_1,m_2}}{m_1} \le x \wedge T_{m_1,m_2} = \ell\Big\} \to 2^{-\ell} \cdot \Big(1-e^{-\frac{x}{2}}\Big), \quad \text{for $x > 0$}.
	\end{equation*}
	\item $m_{2} \sim m_{1}$, where the difference $d := m_{1}-m_{2}$ satisfies $d \sim \alpha \sqrt{m_{1}}$, with $\alpha > 0$: using \eqref{eqn:WT-asympt}, we obtain, for $m_1 \to \infty$, $\ell \ge 1$ fixed, and $k = \mathcal{O}\big(m^{\frac{1}{2}+\epsilon}\big)$, with $\epsilon>0$, when setting $x=\frac{k}{\sqrt{m_1}}$,
	\begin{equation*}
	  \P\Big\{\frac{W_{m_1,m_2}}{\sqrt{m_1}} \le x \wedge T_{m_1,m_2} = \ell\Big\} \to 2^{-\ell} \cdot \Big(1-e^{-\frac{x(2\alpha+x)}{4}}\Big), \quad \text{for $x > 0$}.
	\end{equation*}
	\item $m_{2} \sim m_{1}$, where the difference $d := m_{1}-m_{2}$ satisfies $d = o(\sqrt{m_{1}})$: For $m_1 \to \infty$, $\ell \ge 1$ fixed, and say $k \le \min\Big\{m_{1}^{\frac{1}{2}+\epsilon},\frac{m_1^{\frac{3}{4}}}{\sqrt{d}}\Big\}$, it holds
	\begin{equation*}
	  \P\{W_{m_1,m_2} \le k \wedge T_{m_1,m_2} = \ell\} \to 2^{-\ell} \cdot \Big(1-e^{-\frac{k^{2}}{4m_1}}\Big),
	\end{equation*}
	i.e., by setting $x=\frac{k}{\sqrt{m_1}}$,
	\begin{equation*}
	  \P\Big\{\frac{\cdot W_{m_1,m_2}}{\sqrt{m_1}} \le x \wedge T_{m_1,m_2} = \ell\Big\} \to 2^{-\ell} \cdot \Big(1-e^{-\frac{x^{2}}{4}}\Big), \quad \text{for $x > 0$}.
	\end{equation*}
	\end{itemize}
\end{proof}

\subsection{Correlation in central region}

As a consequence of Theorem~\ref{thm:WT-JointLimit}, the discrete r.v.\ $\hat{X}_{\rho}$ and $\hat{Y}_{\rho}$ occurring as limits of $W_{m_1,m_2}$ and $T_{m_1,m_2}$, resp., in the central region $m_2 \sim \rho m_1$, $0 < \rho < 1$, are not independent. In order to get some insight into the joint variability of $\hat{X}_{\rho}$ and $\hat{Y}_{\rho}$ and their linear correlation we compute the covariance between $\hat{X}_{\rho}$ and $\hat{Y}_{\rho}$ as well as the correlation coefficient.

To this aim, let us introduce the joint moment generating function
\begin{equation*}
  \hat{P}(s,t) := \E\big(e^{s \hat{X}_{\rho} + t \hat{Y}_{\rho}}\big) = \sum_{k \ge 0} \sum_{\ell \ge 1} \P\big\{\hat{X}_{\rho}=k \wedge \hat{Y}_{\rho}=\ell\big\} e^{ks + \ell t}.
\end{equation*}
Using $\hat{p}_{k, \ell} = \P\big\{\hat{X}_{\rho}=k \wedge \hat{Y}_{\rho}=\ell\big\}$ as defined in Theorem~\ref{thm:WT-JointLimit}, we get, after distinguishing $k=0$ and $k \ge 1$, via basic summation and simple manipulations,
\begin{align*}
  \sum_{\ell \ge 1} \hat{p}_{0,\ell} e^{\ell t} & = \sum_{\ell \ge 1} \frac{\rho - \rho^{\ell}}{(1+\rho)^{\ell+1}} e^{\ell t} = \frac{\rho (1-\rho) e^{2t}}{(1+\rho)(1+\rho-e^{t})(1+\rho-\rho e^{t})},\\
	\sum_{k \ge 1} \sum_{\ell \ge 1} \hat{p}_{k,\ell} e^{k s + \ell t} & = \sum_{k \ge 1} \sum_{\ell \ge 1} \frac{1-\rho}{\rho(1+\rho)} \Big(\frac{\rho}{1+\rho}\Big)^{\ell} \Big(\frac{2\rho}{1+\rho}\Big)^{k} e^{ks + \ell t}\\
	& = \frac{2\rho (1-\rho) e^{s+t}}{(1+\rho)(1+\rho-2\rho e^{s})(1+\rho-\rho e^{t})},
\end{align*}
and, by combining these expressions, the following result for the joint moment generating function:
\begin{equation}\label{eqn:Pst_explicit}
  \hat{P}(s,t) = \frac{\rho(1-\rho)e^{t} (e^{t} +2e^{s}-2e^{s+t})}{(1+\rho-e^{t})(1+\rho-\rho e^{t})(1+\rho-2\rho e^{s})}.
\end{equation}
Carrying out a series expansion of \eqref{eqn:Pst_explicit} around $(s,t)=(0,0)$ and taking into account
\begin{equation*}
  \E\big(e^{s \hat{X}_{\rho} + t \hat{Y}_{\rho}}\big) = 1 + \E(\hat{X}_{\rho}) s + \E(\hat{Y}_{\rho}) t + \E(\hat{X}_{\rho}^{2}) \frac{s^{2}}{2} + \E(\hat{X}_{\rho} \hat{Y}_{\rho}) st + \E(\hat{Y}_{\rho}^{2}) \frac{t^{2}}{2} + \mathcal{O}\big((s+t)^{3}\big),
\end{equation*}
yields the first moments of $\hat{X}_{\rho}$ and $\hat{Y}_{\rho}$:
\begin{gather*}
  \E(\hat{X}_{\rho}) = \frac{2\rho}{1-\rho}, \quad \E(\hat{Y}_{\rho}) = \frac{\rho^{2}+1}{\rho},\\
	\E(\hat{X}_{\rho}^{2}) = \frac{2\rho (1+3\rho)}{(1-\rho)^{2}}, \quad
	\E(\hat{Y}_{\rho}^{2}) = \frac{2\rho^{4}+\rho^{3}+\rho+2}{\rho^{2}}, \quad \E(\hat{X}_{\rho} \hat{Y}_{\rho}) = \frac{2\rho (1+\rho)}{1-\rho}.
\end{gather*}
From these expressions, the variances and the covariance immediately follow:
\begin{align*}
  \Var(\hat{X}_{\rho}) & = \E(\hat{X}_{\rho}^{2}) - \big(\E(\hat{X}_{\rho})\big)^{2} = \frac{2 \rho(1+\rho)}{(1-\rho)^{2}},\\
	\Var(\hat{Y}_{\rho}) & = \E(\hat{Y}_{\rho}^{2}) - \big(\E(\hat{Y}_{\rho})\big)^{2} = \frac{1+\rho-2\rho^{2}+\rho^{3}+\rho^{4}}{\rho^{2}},\\
	\Cov(\hat{X}_{\rho},\hat{Y}_{\rho}) & = \E(\hat{X}_{\rho} \hat{Y}_{\rho}) - \E(\hat{X}_{\rho}) \E(\hat{Y}_{\rho}) = -2.
\end{align*}
Thus, the covariance $\Cov(\hat{X}_{\rho},\hat{Y}_{\rho})$ does not depend on the ratio $\rho$. Furthermore, the correlation coefficient $C_{\rho} := \Corr(\hat{X}_{\rho},\hat{Y}_{\rho})$ is given by
\begin{equation*}
  C_{\rho} = \Corr(\hat{X}_{\rho},\hat{Y}_{\rho}) = {\textstyle{\frac{\Cov(\hat{X}_{\rho},\hat{Y}_{\rho})}{\sqrt{\Var(\hat{X}_{\rho})} \, \sqrt{\Var(\hat{Y}_{\rho})}}}} = - \frac{\sqrt{2 \rho} \, (1-\rho)}{\sqrt{(1+\rho)(1+\rho-2\rho^{2}+\rho^{3}+\rho^{4})}}.
\end{equation*}
Simple calculations show that $C_{\rho}$ takes its minimum for $\rho = \tilde{\rho}$, the unique root of $\kappa(\rho) = 1-3\rho-3\rho^{2}+3\rho^{3}-6\rho^{4}-2\rho^{5}+2\rho^{6}$ in the interval $[0,1]$, numerically, $\tilde{\rho} \approx 0.269187$, with corresponding minimal correlation coefficient $C_{\tilde{\rho}} \approx -0.444039$. The correlation coefficient $C_{\rho}$, considered as a function of the ratio $\rho$, is illustrated in Figure~\ref{fig:Corr}.

\begin{figure}[htb]
\begin{center}
\includegraphics[height=4cm]{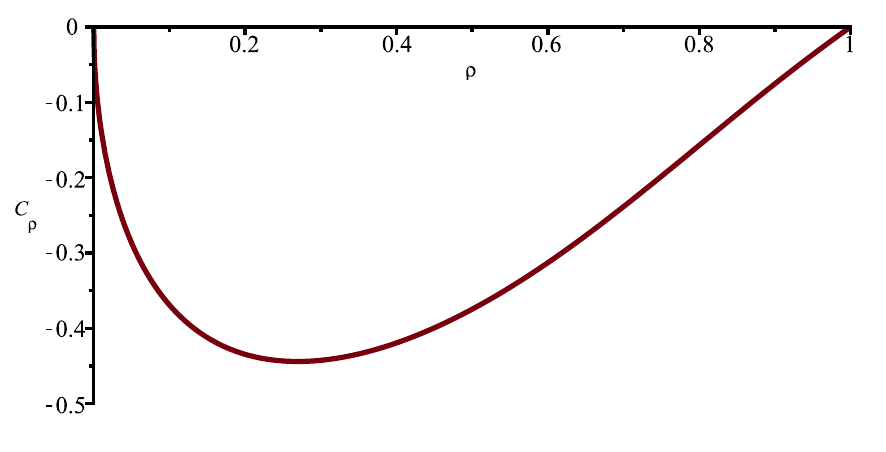}

\vspace*{-5mm}

\caption{The correlation coefficient $C_{\rho} = \Corr(\hat{X}_{\rho},\hat{Y}_{\rho})$ as a function of $\rho$ in the interval $[0,1]$.\label{fig:Corr}}
\end{center}
\end{figure} 


\subsection{Expectation and higher moments: equal number of cards}
At the end we compare our limit laws to the result of Diaconis and Graham~\cite{DiaconisGraham1981} for the expected value.
They considered the more general case of $n$ different colours, but restricted themselves there to the special case
$m_1=m_2=\dots=m_n=m$, i.e., to the r.v.\ $C_{m,m,\dots,m}$ counting the number of correct guesses when starting with a random deck of $n m$ cards containing each of the $n$ colours exactly $m$ times.
\begin{theorem}[Diaconis-Graham~\cite{DiaconisGraham1981}]
\label{the:DG}
The expected value $S_{m,n}=\E(C_{m,m,\dots,m})$ of the number of correct guesses satisfies for fixed $n\in\N$, with $n\ge 2$, and $m\to\infty$ the expansion
\[
S_{m,n}=m + \frac{\pi}2 \cdot M_n \cdot \sqrt{m} + o(\sqrt{m}),
\]
where $M_n=\E(\max\{N_1,\dots,N_n\})$ and the random variables $N_k$ are iid standard normal distributed.
\end{theorem}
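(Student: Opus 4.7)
\medskip

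\textbf{Proof proposal.} The plan is to express $S_{m,n}-m$ as a sum of one-step conditional success probabilities and then identify its leading asymptotic behaviour via a multivariate central limit theorem combined with a Riemann-sum evaluation.

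First, I would set up a one-step decomposition. Let $A_i^{(k)}$ denote the number of cards of colour $i$ still in the deck after $k$ reveals. The optimal strategy guesses a colour attaining $\max_i A_i^{(k)}$, so the conditional probability of a correct guess at step $k+1$ is $\max_i A_i^{(k)}/(nm-k)$. By linearity of expectation,
\[
  S_{m,n} = \sum_{k=0}^{nm-1} \E\!\left[\frac{\max_{1\le i\le n} A_i^{(k)}}{nm-k}\right].
\]
Centering via $B_i^{(k)} := A_i^{(k)}-(nm-k)/n$ (so $\sum_i B_i^{(k)} = 0$ and $\max_i B_i^{(k)}\ge 0$), and using that the constant summand $1/n$ telescopes to $m$, yields
\[
  S_{m,n}-m = \sum_{k=0}^{nm-1}\E\!\left[\frac{\max_{1\le i\le n} B_i^{(k)}}{nm-k}\right].
\]

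Second, I would apply a multivariate CLT to the hypergeometric vector $(A_1^{(k)},\dots,A_n^{(k)})$. For $k\to\infty$ with $nm-k\to\infty$, writing $\sigma_k^2 := \frac{k(nm-k)(n-1)}{n^2(nm-1)}$, one has $\sigma_k^{-1}(B_1^{(k)},\dots,B_n^{(k)}) \claw \sqrt{n/(n-1)}\,(Z_1-\bar Z,\dots,Z_n-\bar Z)$, where $Z_1,\dots,Z_n$ are iid standard normal with sample mean $\bar Z$. Since $\E[\max_i(Z_i-\bar Z)] = \E[\max_i Z_i] = M_n$ (the mean $\bar Z$ drops out), uniform integrability (controlled crudely via $|\max_i B_i^{(k)}|\le nm-k$) upgrades this to
\[
  \E\!\left[\max_i B_i^{(k)}\right] \sim M_n\cdot \sigma_k\cdot \sqrt{\tfrac{n}{n-1}} \sim M_n\sqrt{\tfrac{k(nm-k)}{n^{2}m}}.
\]

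Third, I would recognise the resulting sum as a Riemann sum. Dividing by $nm-k$ and summing,
\[
  S_{m,n}-m \sim M_n\sum_{k=0}^{nm-1}\frac{1}{n\sqrt{m}}\sqrt{\frac{k}{nm-k}} \sim M_n\sqrt{m}\int_0^1\sqrt{\frac{u}{1-u}}\,du = \frac{\pi}{2}\,M_n\sqrt{m},
\]
where the substitution $u=k/(nm)$ turns the sum into the Beta-type integral $\int_0^1 u^{1/2}(1-u)^{-1/2}\,du = \Be(3/2,1/2) = \pi/2$.

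The main obstacle is the boundary regime, $k$ close to $0$ or to $nm$, where the hypergeometric CLT degenerates and the integrand $\sqrt{u/(1-u)}$ is singular at $u=1$. I would handle this by truncating the sum to the range $k\in[\varepsilon nm,(1-\varepsilon)nm]$, where CLT plus uniform integrability applies, and bounding the remaining boundary pieces using the trivial estimates $0\le \max_i B_i^{(k)}\le nm-k$ and $0\le \max_i B_i^{(k)}\le (n-1)(nm-k)/n$, showing that their contribution to the scaled sum vanishes as $\varepsilon\to 0$ taken after $m\to\infty$. Once this double-limit interchange is justified, the stated expansion follows.
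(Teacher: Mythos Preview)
This theorem is not proved in the paper at all; it is quoted from Diaconis and Graham~\cite{DiaconisGraham1981} purely as a point of comparison for the paper's own Theorem~\ref{the:raw}, which treats the case $n=2$ by completely different means (the explicit distribution of $W_{m,m}$, a contour-integral representation of the factorial moments, and singularity analysis). There is therefore no proof in the paper to compare your proposal against.

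Viewed as a standalone argument, your outline---one-step decomposition, multivariate hypergeometric CLT, Riemann sum---is the natural route and is essentially correct, but the boundary treatment you sketch does not close. Near $k=nm$ (write $j:=nm-k$), the pointwise bound $\max_i B_i^{(k)}\le (n-1)(nm-k)/n$ only yields $\E\big[\max_i B_i^{(k)}/(nm-k)\big]\le (n-1)/n$, so the last $\varepsilon nm$ terms contribute $O(\varepsilon m)$, not $o(\sqrt m)$, and the double limit you propose fails with those estimates. What is needed is a bound of order $\sqrt{j}$ rather than $j$, e.g.\ $\E\big[\max_i B_i^{(k)}\big]\le \sum_i \E|B_i^{(k)}|\le n\,\sigma_k=O(\sqrt{nm-k})$ via Cauchy--Schwarz; this makes the tail contribution $O(\sqrt\varepsilon\,\sqrt m)$ and the argument then goes through. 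Similarly, the uniform-integrability step is not controlled by the crude bound $|\max_i B_i^{(k)}|\le nm-k$ (after dividing by $\sigma_k$ this blows up); one should instead use the uniformly bounded normalised second moments of the hypergeometric components.
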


In the special case $n=2$ we observe 
\[
\E(C_{m,m})=S_{m,2}=m+\frac{\sqrt{\pi}}2 \sqrt{m} + o(\sqrt{m}),
\]
using the explicit value $1/\sqrt{\pi}$ of $M_2$. Our results in Theorem~\ref{thm:W_limit} suggest that the expected value
and higher moments for $n=2$ actually stem from the Rayleigh distribution. We obtain the following result, 
which is in accordance with Theorem~\ref{the:DG}, as $\Gamma(3/2)=\sqrt{\pi}/2$, and extends it from $s=1$ to arbitrary high integer moments.

\begin{theorem}
\label{the:raw}
The raw moments of the centered random variable $\hat{C}_{m,m}=C_{m,m}-m$, 
counting the total number of correct guesses, satisfy for $s \in \N$ and $m\to\infty$ the asymptotic expansion
\[
\E(\hat{C}_{m,m}^s)\sim \frac{1}{2^s}\E(R^s) \cdot m^{\frac{s}2}
=\Gamma\big(\frac{s}2+1\big)\cdot m^{\frac{s}2},\quad \text{with} \quad R \law \Rayleigh\big(\sqrt{2}\big).
\]
\end{theorem}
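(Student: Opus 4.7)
The plan is to combine the factorial-moment identity from Corollary~\ref{coro}, namely $\E(\fallfak{\hat{C}_{m,m}}{s}) = 2^{-s}\E(\fallfak{W_{m,m}}{s})$, with a strengthening of the Rayleigh limit $W_{m,m}/\sqrt{m}\claw R\law\Rayleigh(\sqrt{2})$ from Theorem~\ref{thm:W_limit} to convergence of all positive integer moments. The bridge between the two ends is the asymptotic equivalence of raw and factorial moments for random variables growing polynomially in $\sqrt{m}$.

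First I would simplify the diagonal probability mass function from Corollary~\ref{cor:Marginal_dist}. Using the identity $\binom{2m-k}{m}-2\binom{2m-k-1}{m} = \tfrac{k}{m}\binom{2m-k-1}{m-1}$, this collapses to
\[
\P\{W_{m,m}=k\} \;=\; \frac{2^{k}\,k\,m^{\underline{k}}}{(2m)^{\underline{k+1}}}, \qquad 1\le k\le m.
\]
Writing $\tfrac{2^{k}m^{\underline{k}}}{(2m)^{\underline{k}}}=\prod_{j=0}^{k-1}\tfrac{m-j}{m-j/2}\le\exp\!\big(-\tfrac{k(k-1)}{4m}\big)$ yields the uniform Gaussian tail estimate $\P\{W_{m,m}=k\}\le\tfrac{k}{m}\exp\!\big(-\tfrac{k(k-1)}{4m}\big)$ on $1\le k\le m$. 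Together with the pointwise local limit $\sqrt{m}\cdot\P\{W_{m,m}=\lfloor x\sqrt{m}\rfloor\}\to\tfrac{x}{2}e^{-x^{2}/4}$ (obtained from Stirling's formula~\eqref{eqn:Stirling_formula}, exactly as in the proof of Theorem~\ref{thm:W_limit}), dominated convergence then gives $m^{-s/2}\E(W_{m,m}^{s})\to\E(R^{s})$ for every $s\in\N$.

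To transfer this to $\hat{C}_{m,m}$, I would use $\fallfak{x}{s}=x^{s}+O(x^{s-1})$: since all lower-order moments of $W_{m,m}$ satisfy $\E(W_{m,m}^{j})=O(m^{j/2})$, it follows that $\E(\fallfak{W_{m,m}}{s})\sim\E(W_{m,m}^{s})\sim\E(R^{s})\,m^{s/2}$. Plugging this into Corollary~\ref{coro} gives $\E(\fallfak{\hat{C}_{m,m}}{s})\sim 2^{-s}\E(R^{s})\,m^{s/2}$, and expanding $\hat{C}_{m,m}^{s}=\sum_{k=0}^{s}\Stir{s}{k}\fallfak{\hat{C}_{m,m}}{k}$ via Stirling numbers of the second kind shows that only the $k=s$ term contributes to the leading asymptotics, whence $\E(\hat{C}_{m,m}^{s})\sim 2^{-s}\E(R^{s})\,m^{s/2}$. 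Finally, the substitution $u=x^{2}/4$ in $\E(R^{s})=\int_{0}^{\infty}x^{s+1}\tfrac{1}{2}e^{-x^{2}/4}\,dx$ evaluates to $2^{s}\Gamma(s/2+1)$, producing the stated constant $\Gamma(s/2+1)$.

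The main obstacle is the uniform-integrability step: Theorem~\ref{thm:W_limit} delivers distributional convergence but not moment convergence, so one must produce a Gaussian-type bound valid across the \emph{entire} range $1\le k\le m$ rather than only in the central window $k\asymp\sqrt{m}$. The compact product representation of $\P\{W_{m,m}=k\}$ above reduces this to a one-line estimate; once it is in hand, the remaining passages between factorial and raw moments are standard bookkeeping.
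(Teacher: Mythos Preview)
Your argument is correct. The simplification of the diagonal mass function, the product bound $\prod_{j=0}^{k-1}\frac{m-j}{m-j/2}\le\exp(-k(k-1)/4m)$, and the resulting uniform envelope are all sound, and they supply exactly the uniform-integrability input needed to upgrade the Rayleigh limit to moment convergence; the factorial-to-raw bookkeeping via Corollary~\ref{coro} and Stirling numbers is then routine.

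It is, however, a genuinely different route from the paper's \emph{main} proof. There the authors do not touch the probability mass function at all after Corollary~\ref{coro}: they pass through Lemma~\ref{lem:AC1}, which expresses $\E(\fallfak{W_{m,m}}{s})$ as the Cauchy integral $\frac{s!\,2^{s}}{\binom{2m}{m}}\frac{1}{2\pi i}\oint (1-x)^{-s}(1+x)^{2m}x^{s-m-1}\,dx$, then make the substitution $y=x/(1+x)^{2}$ to turn this into a coefficient-extraction problem for a function with an explicit algebraic singularity at $y=1/4$, and read off the asymptotics by singularity analysis \`a la Flajolet--Sedgewick. Your approach is more elementary and probabilistic (a local limit plus dominated convergence), while theirs is analytic-combinatorial and delivers the constant $\Gamma(s/2+1)$ directly via the Legendre duplication formula without ever writing down a tail bound. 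Amusingly, the paper does sketch your method as an ``alternative approach'' after the main proof, but only heuristically (invoking Euler summation without the uniform estimate); you have supplied precisely the missing ingredient that makes that sketch rigorous.
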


In order to prove this result, we need the asymptotics of the factorial moments of $W_{m_1,m_2}$ in the special case $m_1=m_2=m$. We use the following result, relating the factorial moments to a Cauchy integral.
\begin{lem}
\label{lem:AC1}
For $s\ge 1$ the $s$th factorial moment of $W_{m_1,m_2}$ is given by
\[
\E(\fallfak{W_{m_1,m_2}}{s})= \frac{s!2^s}{\binom{m_1+m_2}{m_1}}\frac{1}{2\pi i}\oint_{\mathcal{C}} \frac{1}{(1-x)^s}\cdot (1+x)^{m_1+m_2}\frac{dx}{x^{m_2+1-s}},
\]
with $\mathcal{C}$ a positively oriented simple closed curve around the origin. 
\end{lem}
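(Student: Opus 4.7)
My plan is to start from the explicit probability mass function of $W_{m_1,m_2}$ given in Corollary~\ref{cor:Marginal_dist} and convert the resulting sum for the factorial moment into a Cauchy integral by a standard coefficient-extraction trick. Specifically, using $\binom{m_1+m_2-k}{m_1}=\binom{m_1+m_2-k}{m_2-k}$, I would represent
\[
\binom{m_1+m_2-k}{m_1} = \frac{1}{2\pi i}\oint_{\mathcal{C}} \frac{(1+x)^{m_1+m_2-k}}{x^{m_2-k+1}}\,dx,
\]
and analogously for $\binom{m_1+m_2-k-1}{m_1}$. This is the key step that turns the $k$-dependent binomial into a factor $\big(\frac{x}{1+x}\big)^{k}$ that can be summed.

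Next, after interchanging the sum with the contour integral (permitted for $\mathcal{C}$ a small circle around the origin, where the geometric-type series converges), the summation over $k$ reduces to the standard generating function identity
\[
\sum_{k\ge s}\fallfak{k}{s} y^{k} = \frac{s!\, y^{s}}{(1-y)^{s+1}}, \qquad \text{applied with } y=\frac{2x}{1+x},
\]
so that $1-y=\frac{1-x}{1+x}$ produces the factor $(1-x)^{-(s+1)}$ in the denominator. Carrying this out for both terms coming from the PMF yields, after pulling common factors,
\[
\E(\fallfak{W_{m_1,m_2}}{s})=\frac{s!\,2^{s}}{\binom{m_1+m_2}{m_1}}\cdot \frac{1}{2\pi i}\oint_{\mathcal{C}} \frac{(1+x)^{m_1+m_2}\big[(1+x)-2x\big]}{(1-x)^{s+1}\,x^{m_2+1-s}}\,dx.
\]
The crucial algebraic cancellation is that the bracketed factor simplifies to $1-x$, which reduces the power of $(1-x)$ in the denominator by one and leads precisely to the claimed expression.

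The main obstacle is essentially bookkeeping: keeping index shifts consistent so that the two contour integrals coming from the two binomials appear with the same integrand up to the simple linear factors $(1+x)$ and $2x$, which then combine into $1-x$. One also needs to verify that the contour can indeed be chosen as a small positively oriented circle around $0$ (and in particular avoiding the singularity at $x=1$), so that the geometric sum converges uniformly on $\mathcal{C}$ and the exchange of sum and integral is justified. Once this is in place, the statement follows by comparing numerators and dividing by $\binom{m_1+m_2}{m_1}$.
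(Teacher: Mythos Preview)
Your argument is correct and leads to the stated contour integral. The one small point worth making explicit is that the sum over $k$ in the factorial moment is a priori finite, $s\le k\le m_2$, whereas you apply the identity $\sum_{k\ge s}\fallfak{k}{s}\,y^{k}=s!\,y^{s}/(1-y)^{s+1}$ for an infinite series; this is harmless because, in the Cauchy representation with a small circle $\mathcal{C}$ around the origin, the added terms $k>m_2$ have integrands holomorphic inside $\mathcal{C}$ and hence vanish. Everything else---the uniform convergence on $\mathcal{C}$, the exchange of sum and integral, and the key simplification $(1+x)-2x=1-x$---is exactly as you describe.

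The paper's own proof reaches the same integral by a different route. It first passes to binomial moments, then uses Pascal's rule $\binom{k+1}{s}=\binom{k}{s}+\binom{k}{s-1}$ to make the two-term PMF sum telescope, leaving a single sum $\sum_{k}\binom{k}{s-1}2^{k+1}\binom{m_1+m_2-k-1}{m_1}$. This sum is then recognized as the coefficient $[t^{m_2-1}]\,t^{s-1}(1-2t)^{-s}(1-t)^{-m_1-1}$, converted to a Cauchy integral in $t$, and finally brought to the stated form by the substitution $t=x/(1+x)$. So the paper trades your algebraic cancellation $(1+x)-2x=1-x$ at the end for a combinatorial telescoping at the beginning, followed by a change of variables. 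Your approach is a bit more direct: by writing the binomials immediately as coefficients of $(1+x)^{N}$, you work in the target variable $x$ from the start and avoid both the telescoping step and the substitution. The paper's approach, on the other hand, makes the reduction to a single convolution sum visible before any analytic step, which some readers may find conceptually cleaner.
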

\begin{remark}
The result above and Corollary~\ref{coro} allow to study moment convergence of $W_{m_1,m_2}$ and $\hat{C}_{m_1,m_2}$ not only for $m_1=m_2=m\to\infty$, but also for the different cases considered in Theorem~\ref{thm:W_limit}. However, the analysis gets much more involved and we refrain from going into details.
\end{remark}

\begin{proof}[Proof of Lemma~\ref{lem:AC1}]
It is convenient to consider the binomial moments $\E\binom{W_{m_1,m_2}}{s}$, such that 
\[
\E(\fallfak{W_{m_1,m_2}}{s}) = s!\cdot \E\binom{W_{m_1,m_2}}{s}.
\]
From the explicit result in Corollary~\ref{cor:Marginal_dist} we get
\begin{align*}
\E\binom{W_{m_1,m_2}}{s}&=\sum_{k=s}^{m_2}\binom{k}s\P(W_{m_1,m_2}=k)\\
&=\frac{1}{\binom{m_1+m_2}{m_1}}\sum_{k=s}^{m_2}\binom{k}s\Big(2^{k} \binom{m_1+m_2-k}{m_1} - 2^{k+1} \binom{m_1+m_2-k-1}{m_1}\Big).
\end{align*}
By the standard property $\binom{k+1}s=\binom{k}s+\binom{k}{s-1}$ we obtain further
\begin{align*}
&\E\binom{W_{m_1,m_2}}{s}=\sum_{k=s}^{m_2}\binom{k}s\P(W_{m_1,m_2}=k)\\
&\,=\frac{1}{\binom{m_1+m_2}{m_1}}\sum_{k=s}^{m_2}\Big(\binom{k}s 2^{k} \binom{m_1+m_2-k}{m_1} - \binom{k+1}s2^{k+1} \binom{m_1+m_2-k-1}{m_1}\Big)\\
&\quad + \frac{1}{\binom{m_1+m_2}{m_1}}\sum_{k=s}^{m_2-1}\binom{k}{s-1}2^{k+1} \binom{m_1+m_2-k-1}{m_1}.
\end{align*}
The telescoping sum simplifies to $\frac{2^{s} \binom{m_1+m_2-s}{m_1}}{\binom{m_1+m_2}{m_1}}$. 
Further simplification gives 
\[
\E\binom{W_{m_1,m_2}}{s}= 
\frac1{\binom{m_1+m_2}{m_1}}\sum_{k=s-1}^{m_2-1}\binom{k}{s-1}2^{k+1} \binom{m_1+m_2-k-1}{m_1}\Big).
\]
Using the standard expansion for integers $n,j\ge 0$:
\[
[t^n]\frac1{(1-2t)^{j+1}}=2^n\binom{n+j}{j},
\]
we obtain 
\[
[t^{m_2-1}]\frac{t^{s-1}}{(1-2t)^{s}}\cdot \frac1{(1-t)^{m_1+1}}
=\sum_{k=s-1}^{m_2-1}\binom{k}{s-1}2^{k+1-s} \binom{m_1+m_2-k-1}{m_1}.
\]
Thus, by an application of Cauchy's integration formula we obtain the following contour integral representation of the $s$th factorial moments of $W_{m_1,m_2}$:
\[
\E(\fallfak{W_{m_1,m_2}}{s})= \frac{2^s\cdot s!}{\binom{m_1+m_2}{m_1}}\cdot
\frac{1}{2\pi i}\oint_{\mathcal{C}} \frac{1}{(1-2t)^{s}}\cdot \frac1{(1-t)^{m_1+1}}\frac{dt}{t^{m_2-s+1}}.
\]
Finally, we use the substitution $t=\frac{x}{x+1}$ to get the stated result.
\end{proof}
Next we are going to asymptotically evaluate the integral for $m_1=m_2=m\to\infty$. 
\begin{proof}[Proof of Theorem~\ref{the:raw}]
Due to the relation between factorial moments and raw moments 
\[
\E(\hat{C}_{m,m}^s)=\sum_{j=0}^{s}\Stir{s}{j}\E(\fallfak{\hat{C}_{m,m}}{j}),
\]
where $\Stir{s}{j}$ denote the Stirling numbers of the second kind, it suffices to prove the stated expansion for the factorial moments.
By Corollary~\ref{coro} and Lemma~\ref{lem:AC1} we have
\[
\E(\fallfak{\hat{C}_{m,m}}{s})=
\frac{s!}{\binom{2m}{m} 2\pi i}\oint_{\mathcal{C}} \frac{1}{(1-x)^s}\cdot (1+x)^{2m}\frac{dy}{x^{m+1-s}}.
\]
Here, we are in a situation similar to the example considered by Flajolet and Sedgewick~\cite[VIII.39, page 590]{FlaSed}: 
the saddle point method is not directly applicable due to the singularity at $x=1$. 
However, we can circumvent a more difficult analysis using a trick~\cite{FlaSed}, namely another substitution
\[
y=\frac{x}{(1+x)^2}, \quad x(y)=\frac{1-2y-\sqrt{1-4y}}{2y} \sim 1-2\sqrt{1-4y}+\mathcal{O}(1-4y),
\]
where the latter local expansion holds in a slit neighbourhood of the dominant singularity $y=1/4$. 
This leads to
\begin{align*}
\E(\fallfak{\hat{C}_{m,m}}{s})&=
\frac{s!}{\binom{2m}{m}2\pi i}\oint_{\mathcal{C}} \frac{\big(1+x(y)\big)x(y)^{s}}{(1-x(y))^{s+1}}\frac{dy}{y^{m+1}}.
= \frac{s!}{\binom{2m}{m}}[y^{m}]\frac{\big(1+x(y)\big)x(y)^{s}}{(1-x(y))^{s+1}}.
\end{align*}
We can use now standard singularity analysis~\cite{FlaSed}. An expansion around $y=1/4$ gives 
\[
\frac{\big(1+x(y)\big)x(y)^{s}}{(1-x(y))^{s+1}}\sim 
\frac{1}{2^{s}(1-4y)^{\frac{s+1}2}}.
\]
Thus, singularity analysis and the classical expansion
\[
\frac{1}{\binom{2m}m}\sim \frac{\sqrt{\pi} \sqrt{m}}{4^m}
\]
yield
\begin{align*}
\E(\fallfak{\hat{C}_{m,m}}{s})&\sim
\frac{s!m^{\frac{s}2} \sqrt{\pi}}{2^s\Gamma(\frac{s+1}2)}.
\end{align*}
By the Legendre duplication formula
\[
\frac{\sqrt{\pi} \, \Gamma(s+1)}{2^s\Gamma(\frac{s+1}2)}=\Gamma\big(\frac{s}2+1\big),
\]
this leads to the stated result
\[
\E(\fallfak{\hat{C}_{m,m}}{s})\sim \Gamma\big(\frac{s}2+1\big)m^{\frac{s}2}.
\]
\end{proof}

At the very end, we also sketch an alternative approach to Theorem~\ref{the:raw}, 
based on a local limit theorem; compare also with Zagier's analysis~\cite{Zagier1990} for the expected value.
Following our previous analysis of the joint limit laws and using
\[
\P\{W_{m,m}=k\}=\frac{2^k\binom{2m-k-1}{m-1-1}\frac{k}{m}}{\binom{2m}m},\quad 0\le k\le m,
\]
we observe that, for $k$ of order $\sqrt{m}$, 
\[
\P\{W_{m,m}=k\}\sim \frac{k}{2m}e^{-\frac{k^2}{4m}}.
\]
Consequently, by Euler's summation formula
\[
\E(W_{m,m}^s)\sim \sum_{k=0}^{\infty}k^s\frac{k}{2m}e^{-\frac{k^2}{4m}} \sim 
\int_0^{\infty}x^s\cdot \frac{x}{2m}e^{-x^2/4m}dx.
\]
As $f(x)=\frac{x}{2m}e^{-x^2/4m}$ is the density of a Rayleigh-distributed random variable with parameter $\sigma=\sqrt{2m}$, 
we obtain further
\[
\E(W_{m,m}^s)\sim \sigma^s\big(\sqrt{2}\big)^s\Gamma\big(\frac{s}2+1\big) =2^s\Gamma\big(\frac{s}2+1\big) m^{\frac{s}2}.
\]

\section*{Declarations of interest}
The authors declare that they have no competing financial or personal interests that influenced the work reported in this paper. 

\bibliographystyle{cyrbiburl}
\bibliography{CardGuessingTwo-refs}{}


\section*{Appendix: Additional combinatorial models}
In the appendix we relate the card guessing game with two types of cards to different combinatorial problems. 
This allows to directly translate the results for the random variable $W_{m_1,m_2}$, see Corollary~\ref{cor:Marginal_dist}, Theorems~\ref{thm:W_limit},~\ref{thm:WT-JointLimit} and Lemma~\ref{lem:AC1},
to the different random variables stated in the following.

\smallskip

First we recall the definition of the sampling without replacement urn. 
An urn contains two types of balls, say $m_1$ red and $m_2$ black balls, with $m_1,m_2\in\N_0$. 
The urn evolves by successive draws of random balls at discrete instance according to the transition matrix 
$\left(
\begin{smallmatrix}
-1 & 0\\
0 & -1\\
\end{smallmatrix}
\right)
$, which means that the colour of the drawn ball is inspected and then the ball is discarded. 
Usually, one is interested in the composition of the urn once a colour is fully depleted, 
but here we continue this process until the urn is completely empty. 

Let $G_{m_1,m_2}$ denote the random variable counting the number of times there are equally many red and black balls in the sampling without replacement urn process, except for the empty urn, starting with $m_1$ red and $m_2$ black balls, $m_1\ge m_2\ge 0$.
\begin{prop}[Card guessing and equality in the sampling without replacement urn]
\label{prop:appendix1}
The random variable $G_{m_1,m_2}$ has the same distribution as the random variable $W_{m_1,m_2}$, counting the number of times during the card guessing process when the number of red and black cards are equal and non-zero.
\end{prop}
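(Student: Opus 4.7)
The plan is to identify both random variables through their lattice-path realizations and to exhibit a measure-preserving \emph{folding} map between them. Recall from Section~\ref{sec:Geometric_interpretation} that $W_{m_1,m_2}$ counts diagonal visits $(j,j)$, $j\ge 1$, of the weighted random wedge path in $\{x\ge y\ge 0\}$ from $(m_1,m_2)$ to the origin produced by the card guessing game. Correspondingly, $G_{m_1,m_2}$ counts such visits of an unrestricted lattice path from $(m_1,m_2)$ to $(0,0)$, chosen uniformly among the $\binom{m_1+m_2}{m_1}$ possibilities, since each permutation of the $m_1+m_2$ balls in the sampling-without-replacement urn determines exactly one such path, all equally likely.

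The central construction is the folding map $\Phi$, defined by reflecting across the line $y=x$ every portion of a sampling path that lies above the diagonal. Explicitly, if $\pi(t)=(a_t,b_t)$ is the position of the sampling path at time $t$, I set $\Phi(\pi)(t):=(\max(a_t,b_t),\min(a_t,b_t))$. Since the reflection across $y=x$ fixes the diagonal pointwise, $\pi(t)=(j,j)$ if and only if $\Phi(\pi)(t)=(j,j)$, so $\Phi$ preserves the number of diagonal visits time by time.

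Next, I would verify that $\Phi$ is exactly $2^k$-to-one on the fibre over any wedge path $\bar\pi$ with $k$ diagonal visits $(j_1,j_1),\dots,(j_k,j_k)$. Such a $\bar\pi$ decomposes into a (possibly empty) initial segment from $(m_1,m_2)$ to $(j_1,j_1)$, which is forced to lie strictly in the interior of the wedge since the first diagonal touch occurs at $(j_1,j_1)$, followed by $k$ Dyck-type excursions (between consecutive diagonal visits, and from $(j_k,j_k)$ down to the origin). Each of these $k$ excursions can independently be placed either below or above the diagonal before folding, producing exactly $2^k$ preimages under $\Phi$.

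Finally, I would compute the card guessing probability of $\bar\pi$ by multiplying transition probabilities: at a non-diagonal state $(a,b)$ with $a>b$ the left and down step probabilities are $a/(a+b)$ and $b/(a+b)$, identical to those of the sampling urn, while at each diagonal state $(j,j)$ the card guessing assigns probability $1$ to the forced down-step, compared to the urn weight $j/(2j)=1/2$. Hence the card guessing probability of $\bar\pi$ equals $2^k$ times the urn probability $1/\binom{m_1+m_2}{m_1}$, which is precisely the pushforward mass assigned to $\bar\pi$ by the uniform urn measure under the $2^k$-to-one map $\Phi$. Combining this agreement of measures with the time-by-time preservation of diagonal visits yields $G_{m_1,m_2}\law W_{m_1,m_2}$. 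The main technical point is the excursion decomposition supporting the preimage count, but this is standard Dyck-path bookkeeping.
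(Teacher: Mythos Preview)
Your proposal is correct and is essentially the paper's own argument: the paper defines a surjection $\psi\colon\Omega_S\to\Omega_C$ by mirroring above-diagonal segments (your pointwise formula $\Phi(\pi)(t)=(\max(a_t,b_t),\min(a_t,b_t))$ is the same map), observes it is $2^k$-to-one over a wedge path with $k$ diagonal visits, and compares weights via $\P_S(\omega)=2^{-k}\P_C(\omega)$. Your excursion decomposition spells out the preimage count a bit more explicitly than the paper does, but the idea and the calculation are identical.
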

\begin{remark}
Time reversal~\cite{FlaDuPuy2006,KV2003} also allows to obtain a relation of to the number of equalities in 
a standard P\'olya urn with ball transition matrix$\left(
\begin{smallmatrix}
1 & 0\\
0 & 1\\
\end{smallmatrix}
\right)
$.
\end{remark}
\begin{proof}
Similar to the geometric interpretation of the card guessing game in Subsection~\ref{sec:Geometric_interpretation}, 
one also thinks of evolution of the sampling urn in terms of lattice paths~\cite{FlaDuPuy2006,HKP2007}.
However, we note that the state spaces are different: for sampling without replacement we consider $\Omega_{S}=\N_0\times \N_0$, 
whereas for the card guessing game we consider only $\Omega_{C}=\{(x,y)\in\N_0\times \N_0\colon x\ge y\ge 0\}$.
In order to prove $\P\{G_{m_1,m_2}=k\}=\P\{W_{m_1,m_2}=k\}$, we proceed by constructing a surjection $\psi$ from 
$\Omega_{S}$ to $\Omega_{C}$. Each sample path $\omega\in\Omega_{S}$ from the sampling without replacement urn, contributing to $\P\{G_{m_1,m_2}=k\}$, is either directly contained in $\Omega_{C}$, or it has a subpath going above the diagonal $y=x$. 

\begin{figure}[!htb]
\begin{center}
\includegraphics[scale=0.45]{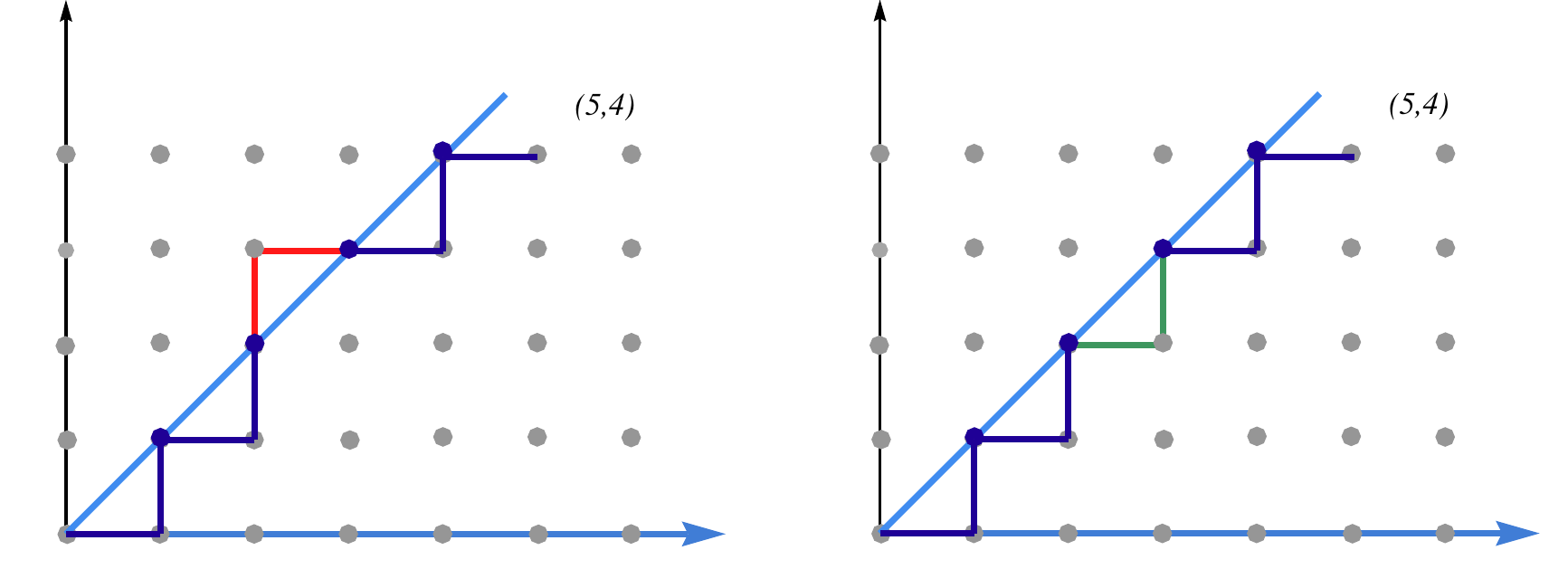}
\caption{A sample path $\sigma\in\Omega_{S}$ and its image $\psi(\sigma)=\omega\in\Omega_C$.}
\end{center}
\end{figure} 

The surjection $\psi$ maps all paths $\sigma\in\Omega_{S}\setminus\Omega_{C}$ to a path $\omega$ in $\Omega_{C}$ by mirroring the parts above the diagonal. Moreover, its restriction is the identity: $\psi |_{\Omega_{C}}=\text{id}_{\Omega_{C}}$. Comparing the weight of a path $\omega\in\Omega_{C}$ in the sampling without replacement urn and the card guessing process
with 
\[
W_{m_1,m_2}(\omega)=G_{m_1,m_2}(\omega)=k,
\]
we observe $\P_S(\omega)=\frac1{2^k}\P_C(\omega)$. All $2^k-1$ different path $\sigma\in\Omega_{S}\setminus\Omega_{C}$
with $\psi(\sigma)=\omega$ have the same probability (weight) as $\omega$, and thus 
\begin{align*}
\P\{W_{m_1,m_2}=k\}&=\sum_{\substack{\omega\in\Omega_{C}\\W_{m_1,m_2}(\omega)=k}}\P_C(\omega)
=\sum_{\substack{\omega\in\Omega_{C}\\W_{m_1,m_2}(\omega)=k}}\sum_{\sigma\in\psi^{-1}(\omega)}\P_S(\sigma)\\
&=\sum_{\substack{\sigma\in\Omega_{S}\\ G_{m_1,m_2}(\sigma)=k}}\P_S(\sigma)=\P\{G_{m_1,m_2}=k\}.
\end{align*}
\end{proof}

Next, we relate the problem to classical Dyck paths~\cite{BanderierFlajolet2002}. 
\begin{prop}[Card guessing and Dyck paths]
\label{prop:appendix2}
The random variable $W_{m_1,m_2}$ has the same distribution
as the random variable $X_{m_1+m_2}$, counting the number of returns to zero in a Dyck 
walk of length $m_1+m_2$, with final altitude $m_2-m_1$. 
In the special case $m_1=m_2=m$ we observe a Dyck bridge of length $2m$.
\end{prop}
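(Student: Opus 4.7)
The plan is to set up a direct, probability-preserving bijection between outcomes of the card drawing process and the $\pm 1$ walks in the proposition. By exchangeability of the deck, the sequence of cards drawn is a uniformly random arrangement of $m_1$ red and $m_2$ black cards, each arrangement occurring with probability $\binom{m_1+m_2}{m_1}^{-1}$. For an arrangement $c_1,\ldots,c_N$ with $N:=m_1+m_2$, I would encode it as a $\pm 1$ walk $(\tilde S_k)_{k=0}^{N}$ by reading the deck from bottom to top: set $\tilde\epsilon_i=+1$ if $c_{N+1-i}$ is black and $\tilde\epsilon_i=-1$ if it is red, and let $\tilde S_k=\sum_{i=1}^{k}\tilde\epsilon_i$. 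Then $\tilde S_0=0$, $\tilde S_N=m_2-m_1$, and the walk has exactly $m_2$ up-steps and $m_1$ down-steps; the map is a bijection onto the set of Dyck walks of length $N$ with final altitude $m_2-m_1$, transporting the uniform law on arrangements to the uniform law on walks.

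The key observation is that $\tilde S_k=0$ for an index $k\ge 1$ if and only if, after the first $N-k$ draws of the original card process, the remaining deck consists of exactly $k/2$ reds and $k/2$ blacks. Indeed, $\tilde S_k$ counts the blacks minus the reds among the bottom $k$ cards, which equals the difference of remaining counts after $N-k$ draws. Such an event is precisely a diagonal visit $(j,j)$ with $j=k/2\ge 1$: parity forces $k\ge 2$ whenever $k<N$, and $k=N$ can only give $\tilde S_N=0$ in the balanced case $m_1=m_2=m$, where it corresponds to the initial state $(m,m)$, also a valid diagonal visit. The single excluded state in the definition of $W_{m_1,m_2}$, namely the empty urn $(0,0)$, is the endpoint of the original process and thus corresponds to $k=0$ in $\tilde S$, which is not counted among returns. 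Consequently $W_{m_1,m_2}$ equals the number of returns of $\tilde S$ to zero, i.e., $W_{m_1,m_2}\law X_{m_1+m_2}$. The case $m_1=m_2=m$ is immediate: the final altitude $m_2-m_1$ is zero, so $\tilde S$ is a Dyck bridge of length $2m$.

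There is no serious obstacle; the argument is a careful bookkeeping of the encoding. The only genuinely subtle point is the choice of reading direction: reading the deck in reverse is what makes ``diagonal visits'' correspond to zero-visits of the walk, rather than to visits at the terminal altitude $m_2-m_1$, and it handles the boundary cases $k=0$ and $k=N$ automatically so that the exclusion of the empty urn and the inclusion (when $m_1=m_2$) of the initial diagonal state fall out without any further case distinction.
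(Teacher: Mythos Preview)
Your proof is correct and uses essentially the same bijection as the paper: both map a uniform arrangement of the deck to a $\pm 1$ walk of length $m_1+m_2$ with final altitude $m_2-m_1$ via reversal (your bottom-to-top reading is exactly the paper's ``rotate the coordinate system and reverse the path direction''), so that diagonal visits become returns to zero, with the empty state corresponding to the starting point of the walk and hence not counted.

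The only organizational difference is that the paper routes the argument through Proposition~\ref{prop:appendix1}, first identifying $W_{m_1,m_2}$ with the equality count $G_{m_1,m_2}$ in the unrestricted sampling-without-replacement urn and only then passing to Dyck walks. That detour is not idle in the paper's framework: the generating-function analysis models the card process on the wedge $x\ge y\ge 0$ (recall $\varphi_{m,m}=w\,\varphi_{m,m-1}$), so reconciling the wedge-computed $W$ with the arrangement-level count is a genuine, if easy, step there. You instead work directly with the arrangement, taking the verbal definition of $W_{m_1,m_2}$ at face value, and bypass that reconciliation. Your route is the more direct one; the paper's makes the link to its wedge model explicit.
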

\begin{remark}
The relation between Card guessing, Sampling without replacement and Dyck paths in the Proposition above and~\ref{prop:appendix1} also allows 
to interpret hitting times considered in~\cite{KuPanPro2009} in terms of Dyck paths.
\end{remark}

\begin{proof}
We actually show that $G_{m_1,m_2}\law X_{m_1+m_2}$, which by Proposition~\ref{prop:appendix1} leads to the stated result. 
First, we observe that for each path from $\sigma\in\Omega_{S}$ it holds that
\[
\binom{m_1+m_2}{m_1}\cdot \P\{\sigma\}=1. 
\]
Thus, instead of considering the step by step evolution of the urn we can enumerate all paths
from $(m_1,m_2)$ to $(0,0)$, touching (or crossing) the diagonal exactly $k$ times. Then, 
the probability is determined all such paths divided by their total number $\binom{m_1+m_2}{m_1}$. 

\begin{figure}[!htb]
\begin{center}
\includegraphics[scale=0.45]{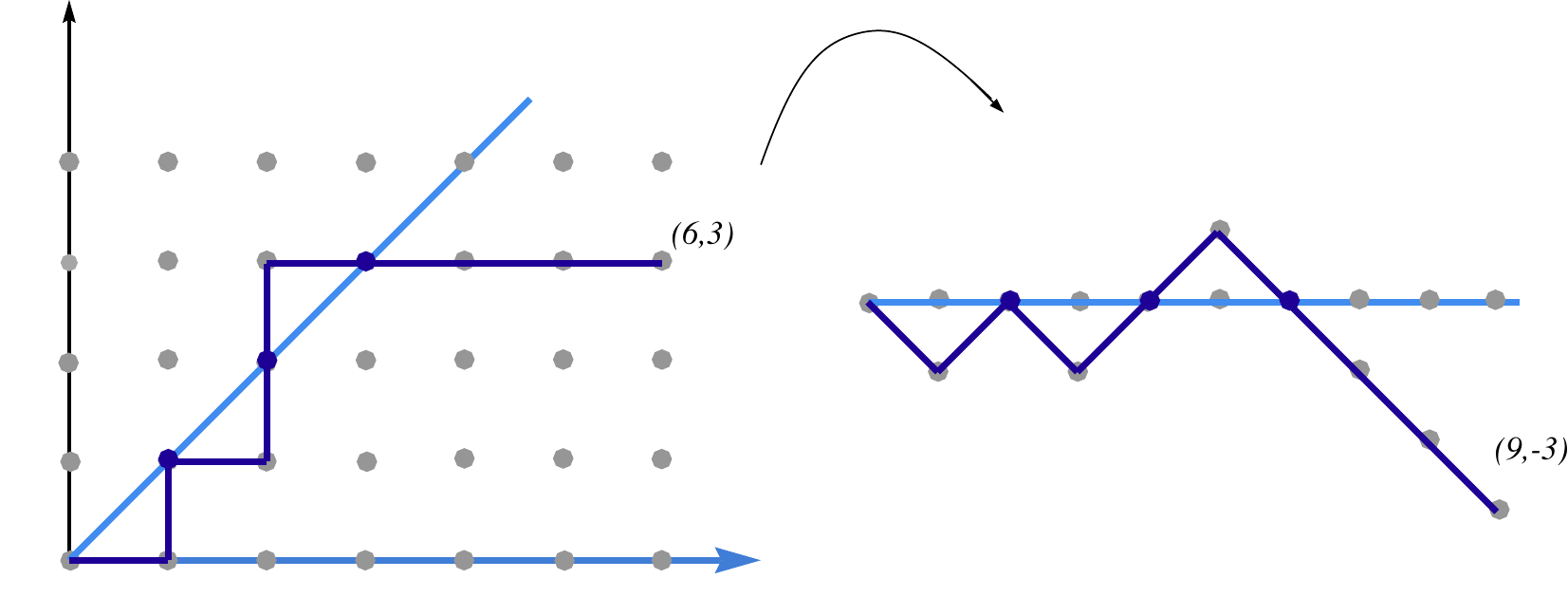}
\caption{A sample path $\omega\in\Omega_{S}$ starting at $(6,3)$ and the corresponding Dyck walk of length $9$ with final altitude $-3$.}
\end{center}
\end{figure} 
After rotation of the coordinate system, we reverse the direction of paths. 
The new steps are $(1,1)$ and $(1,-1)$ and the length of the walks is given by the sum $m_1+m_2$. Thus, we observe the relation to the stated classical Dyck walks, where the final altitude is given by 
$m_2-m_1$. 

\begin{figure}[!htb]
\begin{center}
\includegraphics[scale=0.45]{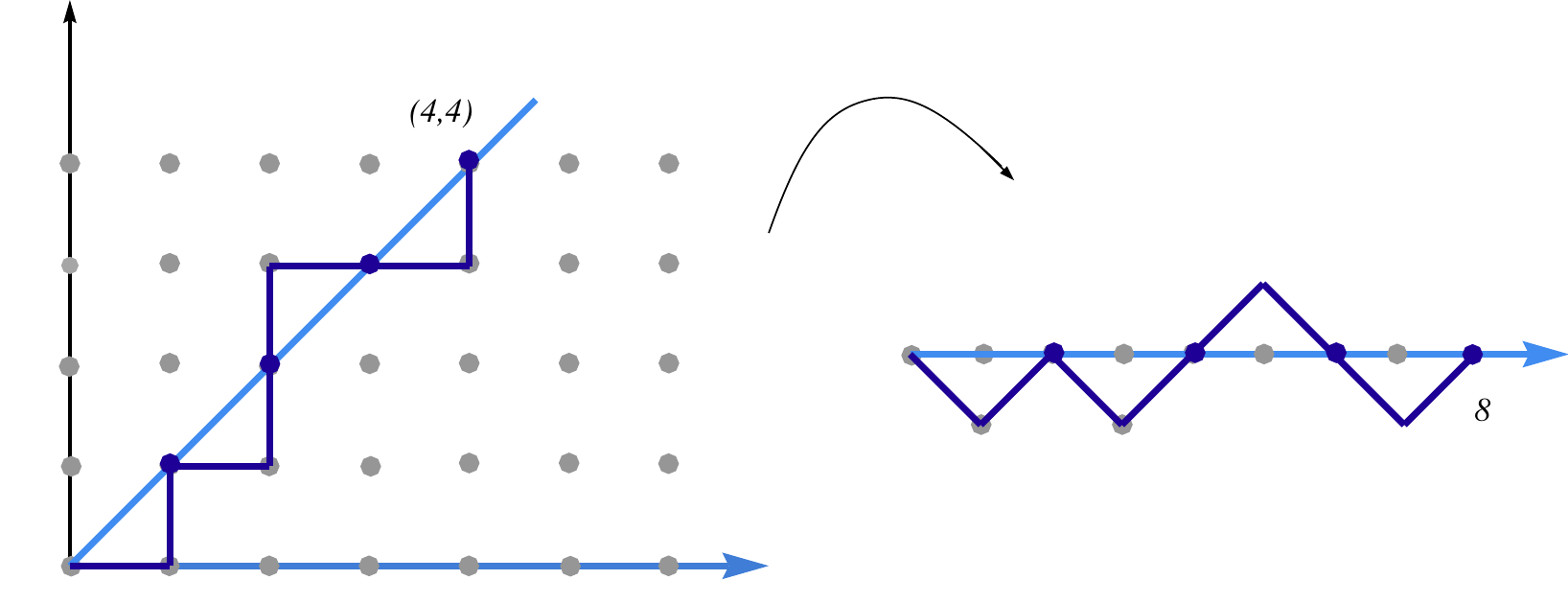}
\caption{A sample path $\omega\in\Omega_{S}$ starting at $(4,4)$ and the corresponding Dyck bridge of length $8$.}
\end{center}
\end{figure} 

In particular, for $m_2-m_1=0$ one obtains ordinary Dyck bridges of length $2m$. This can be used to give in the special case $m_1=m_2=m$ a complete different proof of the Rayleigh limit law
for both $W_{m,m}$, as well as $\hat{C}_{m,m}$.
\end{proof}

\end{document}